\newtheorem{proposition}{Proposition}
\newtheorem{theorem}{Theorem}
\newtheorem{lemma}{Lemma}
\newtheorem{corollary}{Corollary}
\newcommand{\Z}{\mathbb Z}
\newcommand{\R}{\mathbb R}
\newcommand{\wG}{\widetilde G}
\begin{document}

%\markboth{V.O. Manturov, I.M. Nikonov}
%{On braids and groups $G_n^k$}

%%%%%%%%%%%%%%%%%%%%% Publisher's Area please ignore %%%%%%%%%%%%%%
%\catchline{}{}{}{}{}
%%%%%%%%%%%%%%%%%%%%%%%%%%%%%%%%%%%%%%%%%%%%%%%%%%%%%%%%%%%%%%%%%%%
{
\centering
{\large \bf ON BRAIDS AND GROUPS $G_n^k$}

\medskip

{Vassily Olegovich Manturov}
\footnote{
Bauman Moscow State Technical University, Russia;
Laboratory of Quantum Topology, Chelyabinsk State University, Chelyabinsk, Russia.
e-mail: vomanturov@yandex.ru}

{Igor Mikhailovich Nikonov}
\footnote{
Department of Mechanics and Mathematics, Moscow State University, Russia.
e-mail: nikonov@mech.math.msu.su}

}

\abstract{
In~\cite{M1} the first named author gave the definition of $k$-free braid groups $G_n^k$. Here we establish connections between free braid groups, classical braid groups and free groups: we describe explicitly the homomorphism from (pure) braid group to $k$-free braid groups for important cases $k=3,4$. On the other hand, we construct a homomorphism from (a subgroup of) free braid groups to free groups. The relations established would allow one to construct new invariants of braids and to define new powerful and easily calculated complexities for classical braid groups.
 }

%\keywords{braid, diagram, invariant, trisecant, graph, group}
%\ccode{20F10, 20F36, 57M25}

\section{Introduction}
In the present paper, we work with groups $G_{n}^{k}$, which generalize classical braid groups, virtual braid groups,
and other groups in a very broad sense.

Perhaps, the easiest groups are free products of cylcic groups (finite or infinite). In these groups, the word problem
and the conjugacy problem are solved extremely easily: we just contract a generator with its opposite until possible
(in a word or in a cyclic word) and stop when it is impossible.

This ``gradient descent'' algorithm allows one not only to solve the word problem but also to show that
``if a word  $w$ is irreducible then it is contained in any word $w'$ equivalent to it.'' For example,
$abcab$ is contained in $abaa^{-1}cbb^{-1}ab$ in the free product of $\Z*\Z*\Z=\langle a,b,c\rangle$.

Since the discovery of parity theory in low-dimenisonal topology by the first named author~\cite{M2},
similar phenomena arose in low-dimensional topology:

``If a diagram $K$ is odd and irreducible, then it realizes itself as a subdiagram of any other diagram $K'$ equivalent to it''~\cite{M2}.

Here ``irrreducibility'' is similar to group-theoretic irreducibility and ``oddness'' means that all crossings of the diagram are
odd or non-trivial in some sense.

In fact, parity theory allows one to endow each crossing of a diagram with a powerful diagram-like information,
so that if two crossings (resp., two letters $a$ and $a^{-1}$) are contracted then they have the same pictures.
Thus, a crossing possessing a non-trivial picture takes responsibility for the non-triviality of the whole diagram
(braid, word).

How are these two simple phenomena related to each other?

In the present paper, we study classical braid groups. However, unlike virtual braids and virtual knots,
classical braids and classical knots do not possess any good parity to realize the above principle: for
the usual Gaussian parity all crossings turn out to be even and for the component parity there are odd
crossings, but they do not lead to non-trivial invariants~\cite[Corollary 4.3]{IMN}.

However, we can change the point of view of what we call ``a crossing''. There is a natural presentation of the
braid group where generators correspond to horizontal trisecants \cite{M1}.

Besides, there is a similar presentation where generators correspond to horizontal planes having four points lying on
the same circle.

These two approaches first discovered in \cite{M1} lead to the groups $G_{n}^{3}$ and $G_{n}^{4}$.

The groups $G_{n}^{3}$ and $G_{n}^{4}$ share many nice properties with virtual braid groups and free groups.
In particular, for these groups, each crossing contains very powerful information.

In Section~\ref{sect:hom_free_group}, we coarsen this information and restrict ourselves to much coarser invariants of classical
braids which appear as the image of the map from groups $G_{n}^{k}$ to the free groups.

One immediate advantage of this approach is that we can give some obvious estimates for various complexities
of the braid which are easy to calculate. For example,

\begin{enumerate}
\item The number of generators is estimated from below by the number of non-trivial letters in the word which appears
as the image of the $G_{n}^{3}$ element in the corresponding free group.

\item The unknotting number of a braid is estimated by looking at a unique representative of a braid written according
to the presentation given in Section~\ref{sect:unknotting_number}.
\end{enumerate}

The paper is oragnized as follows. We describe homomorphisms from classical pure braids into free braid groups $G_n^3$ and $G_n^4$ in Sections 2 and 3 respectively. Section 4 is devoted to the construction of a homomorphism from even part of free braid groups $G^k_n$ into free products of $\Z_2$. In Section 5 we discuss how the defined homomorphisms can be applied to estimates of braids' complexity.

\section{Homomorphism of pure braids into $G^3_n$}\label{sect:hom_G3n}

We recall~\cite{M1} that the {\em $k$-free braid group with $n$ strands} $G_{n}^k$ is generated by elements $a_m$, $m$ is a $k$-element subset of $\{1,2,\dots,n\}$, and relations $a_m^2=1$ for each $m$, $a_m a_{m'}=a_{m'}a_m $ for any $k$-element subsets $m,m'$ such that $Card(m\cap m')\le k-2$, and {\em tetrahedron relation} $(a_{m_1}a_{m_2}\dots a_{m_{k+1}})^2=1$ for each $(k+1)$-element subset $M=\{ i_1,i_2,\dots, i_{k+1}\}\subset \{1,2,\dots,n\}$ where $m_l=M\setminus\{i_l\}, l=1,\dots,k+1$.

In~\cite{M1}, these groups appear naturally as groups describing dynamical systems of $n$ particles in some ``general position''; generators of $G_n^k$ correspond to codimension $1$ degeneracy, and relations corresponds to codimension $2$ degeneracy which occurs when performing some generic transformation between two general position dynamical systems. Dynamical system leading to $G_n^3$ and $G_n^4$ are described in~\cite{M1}; here we describe the corresponding homomorphisms explicitly.

Generators  of $G_n^3$ correspond to configurations in the evolution of dynamical systems which contain a {\em trisecant}, i.e. three particles lying in one line. Generators  of $G_n^4$ correspond to configurations which includes four particles that lie in one circle.

Let $PB_n$ be the pure $n$-strand braid group. It can be
presented with the set of generators $b_{ij}, 1\le i<j \le n,$ and
the set of relations~\cite{Bard}
\begin{align}
b_{ij}b_{kl}=b_{kl}b_{ij},\quad  i<j<k<l \mbox{ or } i<k<l<j,\\
b_{ij}b_{ik}b_{jk} = b_{ik}b_{jk}b_{ij} = b_{jk}b_{ij}b_{ik}, \quad i<j<k,\\
b_{jl}b_{kl}b_{ik}b_{jk}=b_{jl}b_{kl}b_{ik}b_{jk},\quad i<j<k<l.
\end{align}

It is well known that
\begin{proposition}
The center $Z(PB_n)$ of the group $PB_n$ is isomorphic to $\mathbb
Z$.
\end{proposition}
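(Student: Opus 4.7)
The plan is to exhibit an explicit central element of infinite order in $PB_n$ and then use the Fadell--Neuwirth fibration together with induction on $n$ to show it generates the whole center. I would introduce the \emph{full twist} $\Delta_n^2 \in PB_n$, the element represented geometrically by rotating all $n$ strands by $2\pi$; in the Bardakov presentation it can be written as a product of the $b_{ij}$ in a fixed order. A direct verification using relations $(1)$--$(3)$ shows that $\Delta_n^2$ commutes with every generator $b_{ij}$, so $\Delta_n^2 \in Z(PB_n)$. Since $\Delta_n^2$ has non-zero image under the exponent-sum homomorphism $PB_n \to \Z$, $b_{ij}\mapsto 1$, it has infinite order, and $\langle \Delta_n^2\rangle \cong \Z \subseteq Z(PB_n)$.

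Next I would induct on $n$. The base case $PB_2 \cong \Z = \langle b_{12}\rangle$ is immediate, since $b_{12} = \Delta_2^2$ generates the whole group. For $n \geq 3$ I would invoke the Fadell--Neuwirth short exact sequence obtained by forgetting the last strand,
$$1 \longrightarrow F_{n-1} \longrightarrow PB_n \xrightarrow{\pi} PB_{n-1} \longrightarrow 1,$$
where $F_{n-1} = \langle b_{1n},\dots,b_{n-1,n}\rangle$ is free of rank $n-1$; I would cite this as a classical fact rather than reprove it. Geometrically $\pi(\Delta_n^2) = \Delta_{n-1}^2$. Given $z \in Z(PB_n)$, the image $\pi(z)$ is central in $PB_{n-1}$, so by induction $\pi(z) = \Delta_{n-1}^{2k}$ for some $k\in\Z$. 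Setting $z' = z \cdot \Delta_n^{-2k}$ produces a central element of $PB_n$ lying in the kernel $F_{n-1}$. But then $z'$ commutes with every element of $F_{n-1}$, so $z' \in Z(F_{n-1})$; since $n\geq 3$ forces $F_{n-1}$ to have rank $\geq 2$, its center is trivial, so $z'=1$ and $z \in \langle \Delta_n^2\rangle$, completing the induction.

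The main obstacle will be the two geometric/combinatorial checks about the full twist: its centrality in $PB_n$ and the identity $\pi(\Delta_n^2) = \Delta_{n-1}^2$. Both admit short arguments via the picture of a pure braid as a loop in the configuration space of $n$ points in $\R^2$ (a full revolution of the configuration is insensitive to which strand is labelled ``last''), and can alternatively be verified from the Bardakov relations at the cost of some bookkeeping; I would present the geometric argument and only sketch the algebraic verification as a sanity check.
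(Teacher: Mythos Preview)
The paper does not actually prove this proposition: it is introduced with the phrase ``It is well known that'' and no argument is supplied. Your proposal, by contrast, gives a complete and correct proof --- the standard one via the full twist and the Fadell--Neuwirth short exact sequence $1\to F_{n-1}\to PB_n\to PB_{n-1}\to 1$, with induction on $n$. The key steps (centrality and infinite order of $\Delta_n^2$, surjectivity of $\pi$ forcing $\pi(z)$ to be central downstairs, and triviality of the center of a free group of rank $\ge 2$) are all valid as stated. So there is nothing to compare: you supply what the paper merely quotes.
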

%Let $\theta_n$ be a generator of $Z(PB_n)$.

For each different indices $i,j$, $1\le i,j\le n$, we consider the element $c_{i,j}$ in the group $G_{n}^3$ to be the product
$$c_{i,j}=\prod_{k=j+1}^n a_{i,j,k}\cdot \prod_{k=1}^{j-1} a_{i,j,k}.$$

\begin{proposition}
The correspondence
$$b_{ij}\mapsto c_{i,i+1}^{-1}\dots c_{i,j-1}^{-1}c_{i,j}^2 c_{i,j-1}\dots c_{i,i+1}, i<j,$$
defines a homomorphism $\phi_n\colon PB_n\to G_n^3$.
\end{proposition}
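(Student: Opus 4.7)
The plan is to show that the proposed images satisfy the three defining pure-braid relations; the homomorphism then follows from the universal property of the presentation. Each $\phi_n(b_{ij})$ has the form $A^{-1}c_{i,j}^2A$ with $A=c_{i,j-1}c_{i,j-2}\cdots c_{i,i+1}$, so everything hinges on the behaviour in $G_n^3$ of the elements $c_{i,j}$, which are cyclic products of the involutions $a_{\{i,j,k\}}$ as $k$ runs through $\{1,\ldots,n\}\setminus\{i,j\}$ in a fixed cyclic order.

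The preliminary step is a short list of commutation lemmas in $G_n^3$. Since $a_m$ and $a_{m'}$ commute whenever $|m\cap m'|\le 1$, one gets: (i) $c_{i,j}$ commutes with any $a_m$ having $|m\cap\{i,j\}|\le 1$, and hence with $c_{i',j'}$ whenever $\{i,j\}\cap\{i',j'\}=\emptyset$ except for a controlled correction; (ii) when $\{i,j\}$ and $\{i',j'\}$ share one index, $c_{i,j}$ and $c_{i',j'}$ interact only through the unique factor whose index triple contains both pairs, so that the interaction reduces to a short subword; (iii) when $\{i,j\}$ and $\{k,l\}$ are disjoint, the interaction is governed by the tetrahedron relation $(a_{\{j,k,l\}}a_{\{i,k,l\}}a_{\{i,j,l\}}a_{\{i,j,k\}})^2=1$ on the four-element set $\{i,j,k,l\}$, combined with $a_m^2=1$.

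With these in hand I would verify the relations in turn. The far-commutativity relation $b_{ij}b_{kl}=b_{kl}b_{ij}$ reduces, via (i) and the outer conjugators, to commuting $c_{i,j}^2$ with $c_{k,l}^2$, which follows from (iii). The braid-type relation $b_{ij}b_{ik}b_{jk}=b_{ik}b_{jk}b_{ij}$ for $i<j<k$ is the essential one: after cancelling outer conjugators via (i), it becomes an identity among $c_{i,j}^2$, $c_{i,k}^2$, $c_{j,k}^2$ which must be checked by applying (ii) and the tetrahedron relation on each four-element subset $\{i,j,k,l\}$, $l\notin\{i,j,k\}$. The final relation in the excerpt is printed identically on both sides and is presumably a commutator-type identity; it is handled by the same method.

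The main obstacle is purely combinatorial bookkeeping: each $\phi_n(b_{ij})$ is already a word of length on the order of $(n-2)(j-i)$ in the $a_m$, and the triangle relation juggles three such conjugates. The cleanest route, paralleling the motivation of $G_n^3$ from~\cite{M1}, is to read $\phi_n(b_{ij})$ as the sequence of trisecants swept out when strand $i$ is pushed across strands $i+1,\ldots,j-1$ and twisted once with strand $j$; each pure-braid relation then corresponds to a codimension-$2$ isotopy between two such motions, and the verification becomes a case analysis of how the trisecant loci rearrange (the $a_m^2=1$ relation accounting for cancelling trisecants and the tetrahedron relation accounting for a passing quadrisecant), rather than a direct algebraic grind.
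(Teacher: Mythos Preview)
Your plan is to check the pure-braid relations directly in $G_n^3$; the paper does not do this at all. Its proof is essentially two lines: it invokes Theorem~2 of \cite{M1}, which says that a generic motion of $n$ points in the plane determines, by recording horizontal trisecants, a well-defined element of $G_n^3$ depending only on the isotopy class of the motion. That theorem already guarantees the existence of a homomorphism $PB_n\to G_n^3$ once a base configuration is fixed; the only work left is to compute $\phi_n(b_{ij})$ by choosing a convenient motion representing $b_{ij}$ (point $i$ slides inside the circle past $i+1,\dots,j-1$, loops around $j$, and returns) and reading off the trisecant sequence, which gives the displayed product of the $c_{i,k}$. No relation is ever verified by hand.

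Your direct-verification route is legitimate in principle, and your closing paragraph is essentially rediscovering the paper's viewpoint, but be aware that the algebraic sketch as written has inaccuracies. Lemma~(i) is false: $c_{i,j}$ does \emph{not} commute with every $a_m$ satisfying $|m\cap\{i,j\}|\le 1$. For instance, if $m=\{i,p,q\}$ with $p,q\ne j$, then the factor $a_{\{i,j,p\}}$ of $c_{i,j}$ shares two indices with $a_m$ and need not commute with it. Likewise, when $\{i,j\}\cap\{i',j'\}=\emptyset$ the products $c_{i,j}$ and $c_{i',j'}$ interact through several non-commuting pairs, not through a single tetrahedron relation, so the ``controlled correction'' is more intricate than your outline suggests. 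These issues are fixable, but the bookkeeping is substantial; the paper sidesteps it entirely by importing well-definedness from the geometric theorem and only computing the generator images.
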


\begin{proof}
Consider the configuration of $n$ points $z_k=e^{2\pi ik/n}, k=1,\dots,n$, in the plane $\mathbb R^2=\mathbb C$ where the points lie on the same circle $C=\{z\in\mathbb C\,|\,|z|=1\}$.
Pure braids can be considered as dynamical systems whose initial and final states coincide.
We can assume that the initial state coincide with the configuration considered above.
Then by Theorem 2 from~\cite{M1} there is a homomorphism $\phi_n\colon PB_n\to G_{n}^3$
and we need only describe explicitly the images of the generators of the group $PB_n$.

For any $i<j$ the pure braid $b_{ij}$ can be presented as the following dynamical system:
\begin{enumerate}
\item the point $i$ moves along the inner side of the circle $C$, passes point $i+1, i+2,\dots, j-1$ and land on the circle before the point $j$ (Fig.~\ref{fig:bij_moves} upper left);
\item the point $j$ moves over the point $i$ (Fig.~\ref{fig:bij_moves} upper right);
\item the point $i$ returns to its initial position over the points $j,j-1,\dots, i+1$ (Fig.~\ref{fig:bij_moves} lower left);
\item the point $j$ returns to its position (Fig.~\ref{fig:bij_moves} lower right).
\end{enumerate}

 \begin{figure}
  \centering
  \begin{tabular}{cc}
    \includegraphics[width=0.3\textwidth]{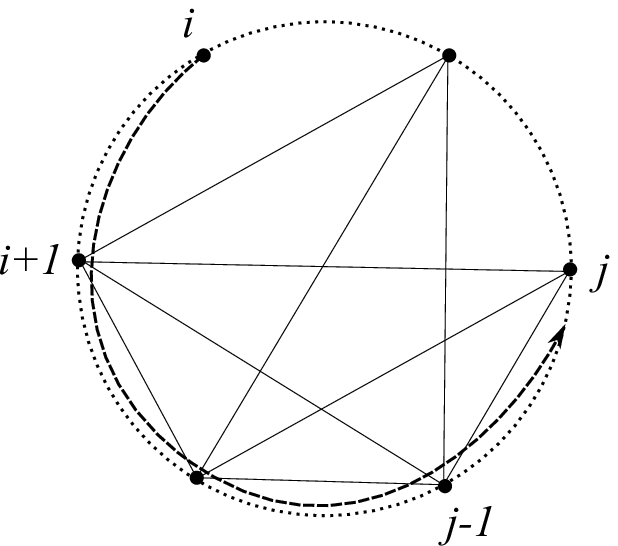} &
    \includegraphics[width=0.3\textwidth]{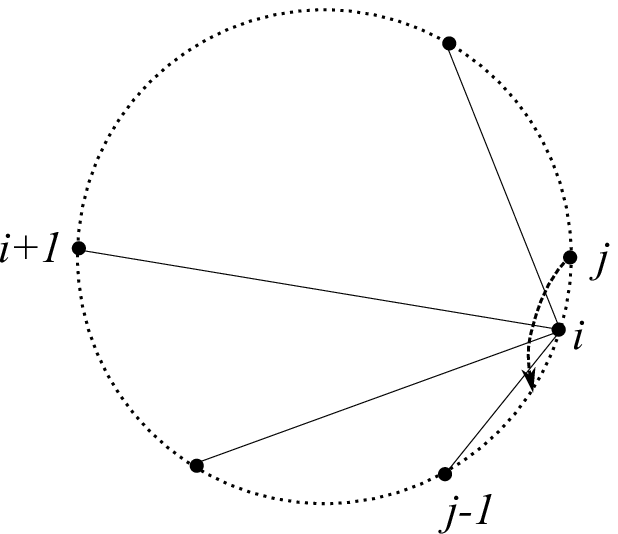} \\
    \includegraphics[width=0.3\textwidth]{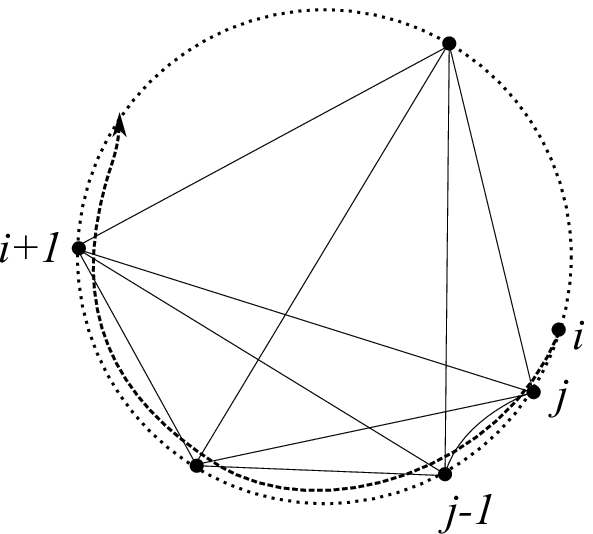} &
    \includegraphics[width=0.3\textwidth]{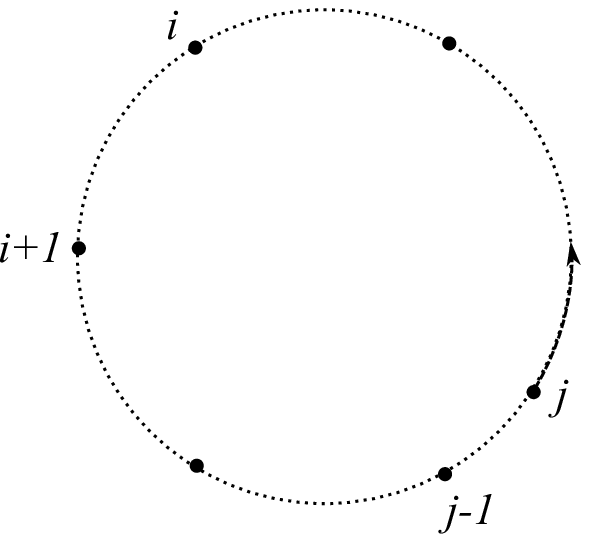}
  \end{tabular}
  \caption{Dynamical system corresponding to $b_{ij}$}\label{fig:bij_moves}
 \end{figure}

As we check all the situations in the dynamical systems where three points lie on the same line,
and write down these situations as letters in a word of the group $G_n^3$ we get exactly the element 
$$c_{i,i+1}^{-1}\dots c_{i,j-1}^{-1}c_{i,j}^2 c_{i,j-1}\dots c_{i,i+1}.$$
\end{proof}

Let $\widetilde G=\Z_2 * \Z_2 * \Z_2$ and let $a_1, a_2, a_3$ be
the generators of $\wG$. Let $\wG_{even}$ be the subgroup in $\wG$ which consists of words of even length. By obvious reason, the $2\pi$ rotation of the whole set of points around the origin has meets no trisecants, thus, the map $PB_3\to G_n^3$ has an obvious kernel.

Let us prove that in the case of $3$ strings it is the only kernel of our map.

\begin{theorem}
There is an isomorphism $PB_3/Z(PB_3)\to \wG_{even}$
\end{theorem}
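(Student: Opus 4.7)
The strategy is to build a refinement $\widetilde\phi_3 \colon PB_3 \to \wG$ of the map from the previous proposition, taking values in the free product $\wG = \Z_2 * \Z_2 * \Z_2$ rather than in $G_3^3 \cong \Z_2$. At each instant when the three moving points become collinear we record the letter $a_i$, where $i$ is the index of the particle lying strictly between the other two; this distinguishes three kinds of trisecants and produces a word in $a_1, a_2, a_3$ whose class in $\wG$ is to be the image of the braid.

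\emph{Well-definedness.} To promote this recipe into a homomorphism $PB_3 \to \wG$ I would mimic the proof of Theorem~2 of~\cite{M1} and inspect every codimension-$2$ degeneracy of a planar motion of three particles. Only two kinds are geometrically possible: a trisecant tangent to its own moment of creation, which yields a pair-cancellation $a_i^2=1$, and a simultaneous pair of trisecants, which cannot occur with only three points. In particular the tetrahedron relation is vacuous (there are no four points to be placed on a common line), so no further relations are forced and the target is indeed $\wG$. A standard parity count on the codimension-$1$ trisecant stratum shows that the word associated to any closed motion has even length, hence $\widetilde\phi_3$ lands in $\wG_{even}$.

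\emph{Center lies in the kernel.} The generator $\Delta^2$ of $Z(PB_3)$ is realized by a rigid $2\pi$-rotation of the reference configuration about the origin; the three points lie on a common circle throughout this rotation, and a circle contains no three collinear points, so not a single trisecant is met. Thus $\widetilde\phi_3(\Delta^2)=1$ and $\widetilde\phi_3$ factors as $\bar\phi_3\colon PB_3/Z(PB_3)\to \wG_{even}$.

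\emph{$\bar\phi_3$ is an isomorphism.} It is classical that $PB_3\cong F_2\times\Z$ with central factor $\langle\Delta^2\rangle$ and $F_2$ freely generated by $b_{12}$ and $b_{23}$, so the source $PB_3/Z(PB_3)$ is free of rank $2$. On the target side, $\wG_{even}$ is the kernel of the length-parity map $\wG\to\Z_2$, hence of index $2$; Kurosh's subgroup theorem (or the Euler characteristic computation $\chi(\wG_{even})=2\chi(\wG)=2(3\cdot\tfrac12-2)=-1$) shows that $\wG_{even}$ is free of rank $2$ as well, with basis $a_1a_2,\;a_2a_3$ (the third candidate $a_3a_1$ being $(a_1a_2)^{-1}(a_2a_3)^{-1}$). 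I would then read the trisecants off the motion depicted in Figure~\ref{fig:bij_moves} specialised to $n=3$ and verify that $\bar\phi_3(b_{12})$ and $\bar\phi_3(b_{23})$ are precisely these two free generators (up to obvious relabelling). Surjectivity follows, and since $F_2$ is Hopfian every surjective endomorphism of $F_2$ is an isomorphism, so $\bar\phi_3$ is an isomorphism.

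\emph{Main obstacle.} The bulk of the work is the well-definedness step: one must enumerate the generic codimension-$2$ singularities of planar $3$-point motions and verify that each of them either corresponds to the relation $a_i^2=1$ or does not occur at all. Once that analysis is in hand, the rest is a routine matching of free groups of rank~$2$: a direct trisecant count in Figure~\ref{fig:bij_moves} fixes the images of $b_{12}$ and $b_{23}$, and Hopficity closes the argument.
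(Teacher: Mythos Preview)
Your proof is correct but takes a genuinely different route from the paper. The paper first identifies $PB_3/Z(PB_3)$ with the kernel $H$ of the strand-forgetting map $PB_3\to PB_2$; for braids in $H$ one may straighten strands $1$ and $2$, so only particle~$3$ moves, and the letters $a_1,a_2,a_3$ record which of the three intervals of the fixed line through $1$ and $2$ the moving particle crosses. Both the map and its inverse are then transparent: an even word dictates a homotopy class of path for particle~$3$ in $\R^2\setminus\{P_1,P_2\}$, and the two constructions are visibly mutually inverse, with no appeal to Hopficity or Kurosh. Your approach instead keeps all three particles in play, records the \emph{middle} particle at each trisecant (which, pleasantly, agrees with the paper's labelling once one restricts to $H$), and finishes algebraically by matching two copies of $F_2$. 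The paper's argument is shorter and entirely elementary, sidestepping the codimension-$2$ analysis you flag as the main obstacle; your argument is closer in spirit to the general $G_n^k$ machinery and makes explicit the refinement of $\phi_3$ from $G_3^3\cong\Z_2$ to $\wG$, at the cost of invoking Kurosh and Hopficity and of deferring the actual trisecant count for $b_{12}$ and $b_{23}$.
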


\begin{proof}
The quotient group $PB_3\to PB_3/Z(PB_3)$ can be identified with a subgroup $H$ in $PB_3$ that consists of the braids for which the strands $1$ and $2$ are not linked. In other words, the subgroup $H$ is the kernel of the homomorphism $PB_3\to PB_2$ that removes the last strand. For any braid in $H$ we can straighten its strands $1$ and $2$. Looking at such a braid as a dynamical system we shall see a family of states where the particles $1$ and $2$ are fixed and the particle $3$ moves.

The points $1$ and $2$ split the line they lie on into three intervals. Let us denote the unbounded interval with the end $1$ as $a_1$, the unbounded interval with the end $2$ as $a_2$ and the interval between $1$ and $3$ as $a_3$ (see Fig.~\ref{fig:PB3}).

 \begin{figure}
  \centering
    \includegraphics[width=0.5\textwidth]{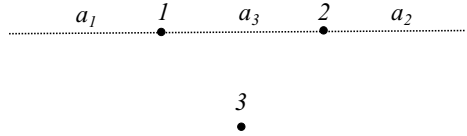}
  \caption{Initial state for a pure braid with $3$ strands}\label{fig:PB3}
 \end{figure}

We assign a word in letters $a_1,a_2,a_3$ to any motion of the point $3$ as follows. We start with the empty word. Every time the particle $3$ crosses the line $12$ we append the letter which corresponds to the interval the point $3$ crosses. If the point $3$ returns to its initial position then the word will have even number of letters. This construction defines a homomorphism $\phi\colon H\to \wG_{even}$.

On the other hand, if we have an even word in letters $a_1,a_2,a_3$ then we can define a motion of the point $3$ up to isotopy in $\mathbb R^2\setminus \{1,2\}$. Thus, we have a well defined map from even words to $H$ that induces a homomorphism $\psi\colon \wG_{even}\to H\simeq PB_3/Z(PB_3)$. It is easy to see that the homomorphisms $\phi$ and $\psi$ are mutually inverse.
\end{proof}

The crucial observation which allows us to prove that this map is an isomorphism, is that we can restore the dynamics from the word in the case of $3$ strands. Since relations in  $G_n^3$ correspond to relations in the braid group, this suffices to prove the isomorphism.

When $n>4$, not all words in $G^3_n$ correspond to the dynamical systems. thus, the question whether $(PB_n)/Z(PB_n)\to G^3_n$ requires additional techniques.

Actually, by using similar methods, one can construct monomorphic map $PB_n\to G^3_{n+1}$; this will be discussed in a separate paper.

\section{Homomorphism of pure braids into $G^4_n$}\label{sect:hom_G4n}

In the present section, we describe an analogous mapping $PB_n\to G^4_n$; here points $z_1,z_2,\dots,z_n$ on the plane are in general position of no four points of them belong to the same circle (or line); codimension $1$ degeneracies will correspond to generators of $G^4_n$, where at some moment exactly one quadruple of points belongs to the same circle (line), and relations correspond to the case of more complicated singularities.

Let $a_{\{i,j,k,l\}}, 1\le i,j,k,l\le n$ be the generators of the group $G^4_n, n>4$.
% We assume $a_{\{i,j,k,l\}}=1$ when there are two equal indices among $i,j,k,l$.

Let $1\le i<j\le n$. Consider the elements
\begin{eqnarray}\label{eq:d_elements}
c^{I}_{ij}=\prod_{p=2}^{j-1}\prod_{q=1}^{p-1}a_{\{i,j,p,q\}},\\
c^{II}_{ij}=\prod_{p=1}^{j-1}\prod_{q=1}^{n-j}a_{\{i,j-p,j,j+q\}},\\
c^{III}_{ij}=\prod_{p=1}^{n-j+1}\prod_{q=0}^{n-p+1}a_{\{i,j,n-p,n-q\}},\\
c_{ij}=c^{II}_{ij} c^{I}_{ij} c^{III}_{ij}.
\end{eqnarray}

\begin{proposition}\label{prop:hom_G4n}
The correspondence

\begin{equation}\label{eq:maps_to_G4n}
b_{ij}\mapsto c_{i,i+1}\dots c_{i,j-1}c_{i,j}^2 c_{i,j-1}^{-1}\dots c_{i,i+1}^{-1},\quad i<j,
\end{equation}
defines a homomorphism $\phi_n\colon PB_n\to G_n^4$.
\end{proposition}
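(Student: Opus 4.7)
The strategy is a direct generalization of the proof in Section~\ref{sect:hom_G3n}. By the general construction of~\cite{M1} — in which dynamical systems of $n$ points in the plane considered modulo codimension-two degeneracies give rise to homomorphisms into $G_n^4$ — it suffices to (a) fix a base configuration of $n$ points in general position with respect to the ``four points on a circle'' stratum, (b) realize each Artin generator $b_{ij}$ by an explicit planar isotopy of that configuration, and (c) read off the word in $G_n^4$ produced by the codimension-one events encountered, recording the generator $a_{\{i,j,k,l\}}$ whenever the four points $z_i,z_j,z_k,z_l$ become concircular (or collinear). Because any two isotopies realizing the same pure braid differ by an ambient two-parameter family crossing only codimension-two strata, the assignment will be well-defined on $PB_n$ as soon as each such codimension-two event matches a defining relation of $G_n^4$, which is precisely what~\cite{M1} provides.

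Concretely I would take $z_k=e^{2\pi i k/n}$ as the base configuration (perturbed slightly if needed so that no four points are initially concircular) and realize $b_{ij}$ by the motion in which point $i$ leaves its position, travels inside the disc along a path passing successively close to $z_{i+1},z_{i+2},\dots,z_{j-1}$, arrives just in front of $z_j$, makes a small loop around $z_j$, and then retraces its path back past $z_{j-1},\dots,z_{i+1}$ to its starting position while all other points remain fixed. The loop around $z_j$ contributes $c_{i,j}^2$, while each passage of $i$ past an intermediate point $z_{j'}$ contributes one factor $c_{i,j'}$ on the outward arc and one factor $c_{i,j'}^{-1}$ on the return arc, giving exactly the conjugation pattern in~\eqref{eq:maps_to_G4n}.

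The substantive combinatorial step, and the main technical obstacle, is verifying that a single passage of $i$ past $z_{j'}$ really does produce the word $c_{i,j'}=c^{II}_{ij'}c^{I}_{ij'}c^{III}_{ij'}$ in that specific order. This amounts to a geometric accounting of which quadruples $\{i,j',p,q\}$ become concircular during the passage and in what sequence. The three factors defined in~\eqref{eq:d_elements} reflect the natural trichotomy of such quadruples according to where $p$ and $q$ sit on the base circle $C$ relative to $z_{j'}$: both on the ``upper'' arc (towards indices $>j$), one on each side of $z_{j'}$, or both on the ``lower'' arc (towards the small indices). A careful choice of the path of $i$ arranges these three families of concircular events to occur consecutively in the order $c^{II},c^{I},c^{III}$ matching the formulas, and a symmetric analysis on the return arc, where every codimension-one crossing is traversed with opposite orientation and in reverse order, produces $c_{i,j'}^{-1}$ rather than $c_{i,j'}$.

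Once the word produced by the described isotopy is identified with the right-hand side of~\eqref{eq:maps_to_G4n}, the pure-braid relations among the $b_{ij}$ are automatically satisfied in $G_n^4$: any two isotopies representing the same element of $PB_n$ are joined by a generic two-parameter family whose singular stratum decomposes into codimension-two loci, and by~\cite{M1} each such locus is encoded by either a commutation relation $a_m a_{m'}=a_{m'}a_m$ (when the quadruples meet in at most two indices) or by the tetrahedron relation (when five points become concircular). The result is a well-defined homomorphism $\phi_n\colon PB_n\to G_n^4$.
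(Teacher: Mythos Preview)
Your overall framework is right --- invoke the general machinery of~\cite{M1} so that well-definedness on $PB_n$ comes for free, and reduce the proof to reading off the word produced by an explicit isotopy realizing $b_{ij}$. But the specific base configuration you choose is exactly the one the paper rejects. If $z_k=e^{2\pi i k/n}$ then \emph{every} quadruple of base points is concircular, so the configuration sits on the deepest stratum of the discriminant; a ``slight perturbation'' does make it generic, but then you have no control over the order in which the moving point crosses the circles $C_{klm}$, and there is no reason the resulting word should match the very specific products $c^{II}_{ij'}c^{I}_{ij'}c^{III}_{ij'}$ of~\eqref{eq:d_elements}. Your sentence ``a careful choice of the path of $i$ arranges these three families of concircular events to occur consecutively'' is precisely the content of the proposition, and it does not follow from anything you have set up.

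The paper instead places the base points on the parabola $\Gamma=\{(t,t^2)\}$ at abscissae $0<t_1<t_2<\cdots<t_n$ growing extremely fast. This buys two things you are missing. First, four points $(x_s,x_s^2)$ on $\Gamma$ are concircular iff $x_0+x_1+x_2+x_3=0$ (a one-line determinant computation), so with all $t_i>0$ the base configuration is automatically generic and the fourth intersection of any circle $C_{klm}$ with $\Gamma$ lies on the negative branch. Second, the tangent slope of $C_{klm}$ at $P_k$ has an explicit rational expression in $t_k,t_l,t_m$; under growth hypotheses such as $t_i\ge 100\,t_{i-1}^2$ these slopes separate cleanly, and one can prove (via several lemmas with actual inequalities) that as the moving point rounds $P_k$ it meets the circles in an order that groups into exactly the three blocks $c^{II}$, $c^{I}$, $c^{III}$ --- corresponding to whether the remaining two indices straddle $k$, both lie below $k$, or both lie above $k$. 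That ordering computation is the real work here, and your proposal does not contain it.
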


In order to construct this map explicitly, we have to indicate the initial state in the configuration space of $n$-tuple of points.

By obvious reason, the initial state with all points lying on the circle does not work; so, we shall use the parabola instead.

Let $\Gamma = \{ (t,t^2)\,|\, t\in\R\}\subset \R^2$ be the graph of the function $y=x^2$. Consider a rapidly increasing sequence of positive numbers $t_1,t_2,\dots, t_n$ (precise conditions on the sequence growth will be formulated below) and denote the points $(t_i,t_i^2)\in\Gamma$  by $P_i$.

Pure braids can be considered as dynamical systems whose initial and final states coincide and we assume that the initial state
is the configuration $\mathcal P=\{P_1,P_2,\dots,P_n\}$. Then by Theorem~2 from~\cite{M1} there is a homomorphism $\psi_n\colon PB_n\to G_{n}^4$.
We need to describe explicitly the images of the generators of the group $PB_n$.

For any $i<j$ the pure braid $b_{ij}$ can be presented as the following dynamical system: the point $i$ moves along the graph $\Gamma$
\begin{enumerate}
\item the point $i$ moves along the graphics $\Gamma$ and passes points $$P_{i+1}, P_{i+2},\dots, P_j$$ from above (Fig.~\ref{fig:G4_bij_moves} upper left);
\item the point $j$ moves from above the point $i$ (Fig.~\ref{fig:G4_bij_moves} upper right);
\item the point $i$ moves to its initial position from above the points $$P_{j-1},\dots, P_{i+1}$$ (Fig.~\ref{fig:G4_bij_moves} lower left);
\item the point $j$ returns to its position (Fig.~\ref{fig:G4_bij_moves} lower right).
\end{enumerate}

 \begin{figure}
  \centering
  \begin{tabular}{cc}
    \includegraphics[width=0.25\textwidth]{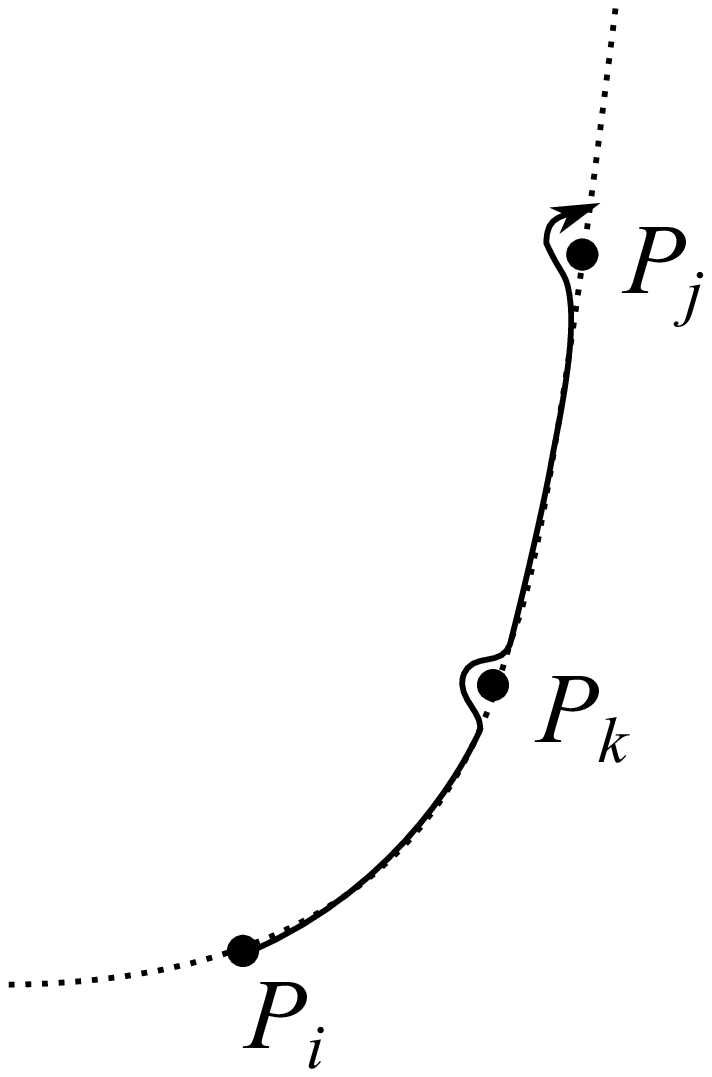} &
    \includegraphics[width=0.25\textwidth]{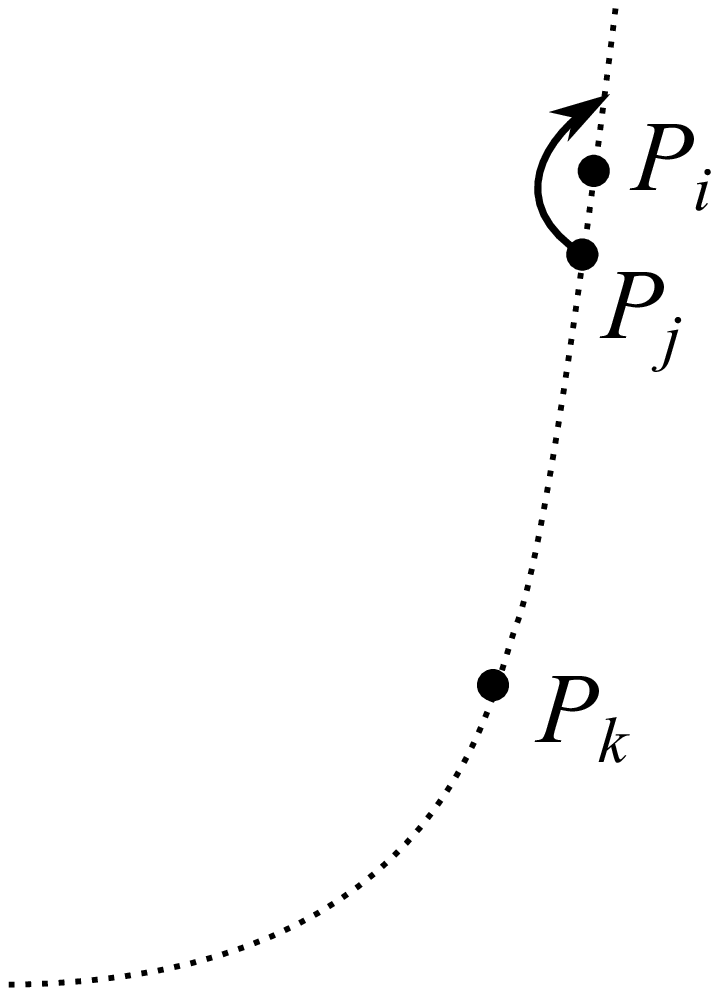} \\
    \includegraphics[width=0.25\textwidth]{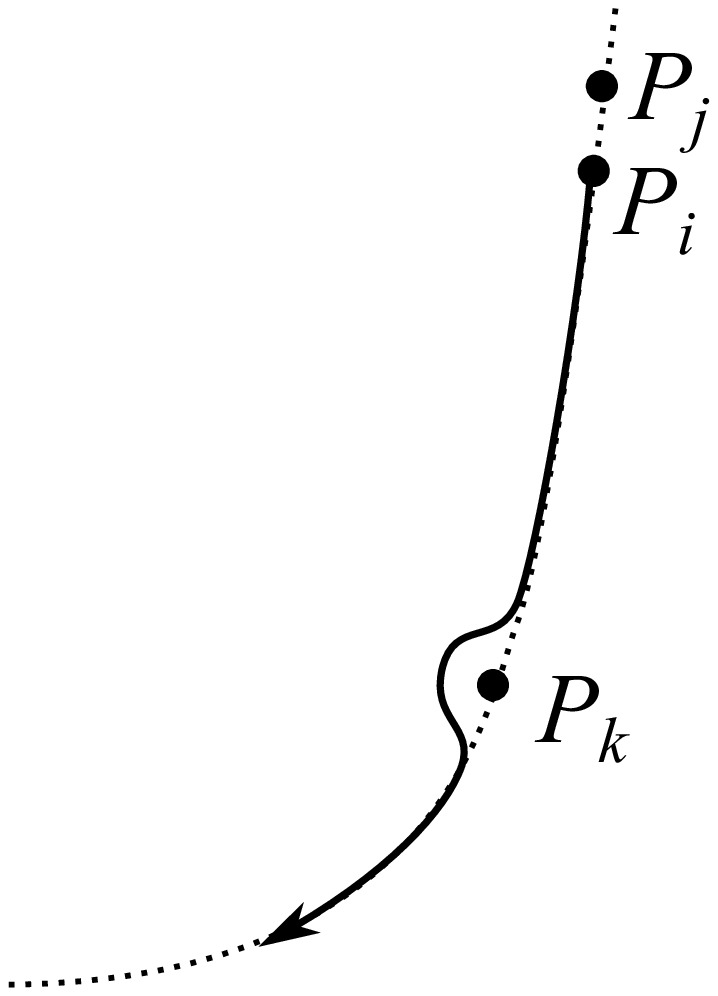} &
    \includegraphics[width=0.25\textwidth]{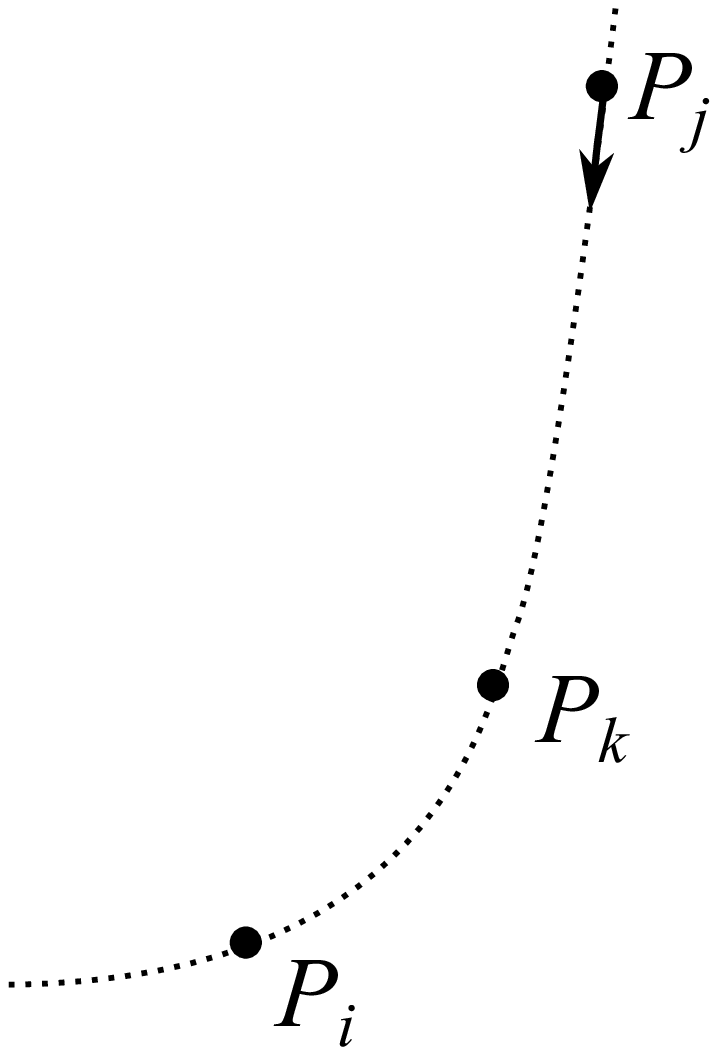}
  \end{tabular}
  \caption{Dynamical system which corresponds to $b_{ij}$}\label{fig:G4_bij_moves}
 \end{figure}

The proof that this dynamical system leads to the presentation~\eqref{eq:maps_to_G4n} is given in Appendix.

\section{Homomorphism into a free group}\label{sect:hom_free_group}

Let $\mathcal N=\{1,2,\dots,n\}$ be the set of indices.

Let $H^k_n\subset G^k_n$ be the subgroup whose elements are given by the {\em even} words, that is words which include any generator $a_m, m\subset\mathcal N, |m|=k$, evenly many times.
We construct a homomorphism $\phi$ from the subgroup $H^k_n$ to the free product of $2^{(k-1)(n-k)}$ copies of the group $\mathbb Z_2$.

Roughly speaking, any letter $a_m$ in $G_n^k$ will get a collection of ``colours'' coming from the interaction of indices from $m$ with other indices. These ``colours'' will remain unchanged when performing the relations $a_ma_{m'}=a_{m'}a_m$ and the tetrahedron relation $\dots a_m\dots = \dots a_m\dots$; moreover, the two  adjacent letters $a_ma_m$ which contracts to $1$ inside the word will have the same colour.

Let $a_m, m\subset\mathcal N, |m|=k$, be a generator of the group $G^k_n$. Without loss of generality we can suppose that $m=\{1,2,\dots,k\}$.
Assume that $p\in\mathcal N\setminus m$. For any $1\le i\le k$ consider the set $m[i]=m\setminus\{i\}\cup\{p\}$.
Let us define a homomorphism $\psi_p\colon G^k_n\to \mathbb Z_2^{\oplus k-1}$ by the formulas
$\psi_p(a_{m[i]})=e_i, 1\le i\le k-1, \psi_p(a_{m[k]})=\sum_{i=1}^{k-1}e_i$ and $\psi_p(a_{m'})=0$ for the other $m'\in\mathcal N, |m'|=k$.
Here $e_1,\dots, e_{k-1}$ denote the basis elements of the group $\mathbb Z_2^{\oplus k-1}$.
The homomorphism $\psi_p$ is well defined since the relations in $G^k_n$ are generated by even words.

Consider the homomorphism $\psi = \bigoplus_{p\not\in m}\psi_p$, $\psi\colon G^k_n\to Z$, where $Z=\mathbb Z_2^{\oplus (k-1)(n-k)}$.
Note that $H^k_n\subset\ker\psi$.

Let $H=\mathbb Z_2^{\ast Z}$ be the free product of $2^{(k-1)(n-k)}$ copies of the group $\mathbb Z_2$
and the exponents of its elements are indiced with the set $Z$. Let $f_x, x\in Z,$ be the generators of the group $H$.
Define the action of the group $G^k_n$ on the set $Z\times H$ by the formula
$$a_m'\cdot (x,y)=\left\{\begin{array}{cl}(x,f_xy) & m'=m, \\ (x+\psi(a_{m'}),y) & m'\neq m.\end{array}\right.
$$

This action is well defined. Indeed, $a_{m'}^2\cdot (x,y)=(x,y)$ for any $m'\in\mathcal N, |m'|=k$. On the other hand,
for any tuple $\tilde m=(i_1,i_2,\dots, i_{k+1})$ such that $m\not\subset \tilde m$
one has $\left(\prod_{l=1}^{k+1}a_{\tilde m\setminus\{i_l\}}\right)^2\cdot (x,y)=(x,y)$ since the generators act trivially here.
If $m\subset \tilde m$ then $\left(\prod_{l=1}^{k+1}a_{\tilde m\setminus\{i_l\}}\right)^2=w_1 a_m w_2 a_m w_3$, where the word $w_2$
and the word $w_1w_3$ are products of generators $m[i], i=1,\dots,k$ and
\begin{multline*}
\left(\prod_{l=1}^{k+1}a_{\tilde m\setminus\{i_l\}}\right)^2\cdot (x,y)=w_1 a_m w_2 a_m w_3 \cdot (x,y) =\\
 w_1 a_m w_2 a_m \cdot (x+\psi(w_3),y) =
  w_1 a_m w_2 \cdot (x+\psi(w_3),f_{x+\psi(w_3)}y)=\\ w_1 a_m\cdot (x+\psi(w_3),f_{x+\psi(w_3)}y)= 
  w_1\cdot(x+\psi(w_3),f_{x+\psi(w_3)}^2y) =\\ w_1\cdot(x+\psi(w_3),y) = (x+\psi(w_1)+\psi(w_3),y)=(x,y).
\end{multline*}
The fourth and the last equalities follow from the equality $\psi(w_1)+\psi(w_3)=\psi(w_1w_3)=\psi(w_2)=\sum_{i=1}^k\psi(a_{m[i]})=0$.

For any element $g\in G^k_n$ define $\psi_x(g)$ from the relation $$g\cdot (x,1)=(x+\psi(g),\psi_x(g)1)$$ and let $\phi(g)=\phi_0(g)$.
Then $g\cdot (x,y)=(x+\psi(g),\psi_x(g)y)$ for any $(x,y)\in Z\times H$.
If $g\in H^k_n$ then  $\psi(g)=0$ and $g\cdot (0,y)=(0,\psi(g)y)$. Hence, for any $g_1,g_2\in H^k_n$
on has $(g_1g_2)\cdot(0,1)= g_1\cdot(g_2\cdot(0,1))=g_1\cdot(0,\phi(g_2)) = (0,\phi(g_1)\phi(g_2))$.
On the other hand, $(g_1g_2)\cdot(0,1)=(0,\phi(g_1g_2))$. Thus, $\phi\colon H^k_n\to H$ is a homomorphism.

For any element $x$ in $H$, let $c(x)$ denote the complexity of $x$, i.e. the length of the irreducible representative word of $x$.

Let $b\in PB_n$ be a classical braid and $\beta = \phi_n(b)$ be the corresponding free braid in $G^3_n$. Note that the word $\beta$ belongs to $H^3_n$. Then for any $m\subset \mathcal N, |m|=3$, the map $\phi$ defined above is applicable to $\beta$. The geometrical complexity of the braid $b$ can be estimated by complexity of the element $\phi(\beta)$.

\begin{proposition}
The number of horizontal trisecants of the braid $b$ is not less than $c(\phi(\beta))$.
\end{proposition}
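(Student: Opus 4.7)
The plan is to exhibit an explicit word representative of $\phi(\beta)$ in the free product $H$ whose length is bounded by the number of horizontal trisecants of $b$; the desired inequality then follows because $c(\phi(\beta))$ is by definition the minimal length over all such representatives.

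First I would fix a dynamical-system realization of the pure braid $b$ --- for instance of the type constructed in Section~\ref{sect:hom_G3n} --- and record the ordered sequence of horizontal-trisecant moments occurring during its evolution. Reading off the triple of strands involved at each such moment produces a word $w = a_{m_1} a_{m_2} \cdots a_{m_N}$ in the generators of $G_n^3$, where $N$ is precisely the number of horizontal trisecants of the chosen realization. By the construction of $\phi_n$ in Section~\ref{sect:hom_G3n}, this word represents $\beta = \phi_n(b)$ in $G_n^3$, and since $b$ is a pure braid, $w$ lies in the even subgroup $H_n^3$ on which $\phi$ is defined.

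Next I would compute $\phi(w)$ by letting $w$ act on the state $(0,1)\in Z\times H$. According to the action defined in this section, a letter $a_{m_i}$ with $m_i\neq m$ leaves the $H$-coordinate untouched and merely translates the $Z$-coordinate by $\psi(a_{m_i})$, while a letter $a_{m_i}$ with $m_i=m$ left-multiplies the $H$-coordinate by the single generator $f_x$, where $x$ is the current $Z$-coordinate. Evolving $(0,1)$ under the word $w$ therefore produces $\phi(w)\in H$ as a product of exactly $|\{\,i:m_i=m\,\}|\le N$ generators $f_x$, giving a representative word in $H$ of length at most $N$.

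Finally, since $c(\phi(\beta))$ is by definition the length of the irreducible representative of $\phi(\beta)=\phi(w)$, it is bounded above by the length of any representative of this element, so $c(\phi(\beta))\le N$. There is no real obstacle in this proof: the key content is simply the observation that $\phi$ is length-nonincreasing letter by letter. The only point requiring a small amount of care is that one must invoke the construction of Section~\ref{sect:hom_G3n} to ensure that the word read off from the chosen dynamical system does represent the class $\phi_n(b)$ in $G_n^3$; once this is granted, the bound is immediate.
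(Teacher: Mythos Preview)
Your argument is correct and is essentially an explicit unpacking of what the paper records tersely as ``follows from the definition of the maps $\phi_n$ and $\phi$.'' The only minor clarification worth making is that the well-definedness of $\phi_n$ on arbitrary dynamical-system realizations of $b$ (so that any realization yields a word representing the same $\beta$) is the content of Theorem~2 of~\cite{M1} referenced in Section~\ref{sect:hom_G3n}, not of the explicit formulas given there; once that is granted, your letter-by-letter bound $c(\phi(\beta))\le |\{i:m_i=m\}|\le N$ is exactly the intended argument.
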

The proof of the statement follows from the definition of the maps $\phi_n$ and $\phi$.

Using the homomorphism $PB_n\to G^4_n$, we get an analogous estimation for the number of ``circled quadrisecants'' of the braid.

\section{Free Groups and Crossing Numbers}\label{sect:unknotting_number}

The homomorphism described above allows one to estimate the new complexity for the braid group $BP_n$ by using $G^3_n$ and $G^4_n$. These complexities have an obvious geometrical meaning as the estimates of the number of horizontal trisecants ($G^3_n$) and   ``circled quadrisecants'' ($G^4_n$).

Let $G=\Z_{2}^{*3}$ be the free product of three copies of $\Z_{2}$ with generators $a,b,c$ respectively.
A typical example of the word in this group is

\begin{equation}\label{eq:word_in_free_group}
w=abcbabca...
\end{equation}

 \begin{figure}
  \centering
    \includegraphics[width=0.25\textwidth]{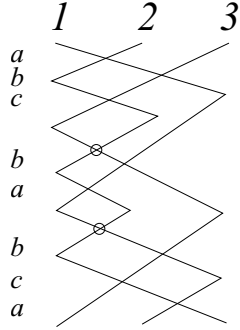}
  \caption{Braid which corresponds to the word $w$}\label{fig:braid_of_word}
 \end{figure}

Note that the word~\eqref{eq:word_in_free_group} is irreducible, so, every word equivalent to it contains it as a subword.
In particular, this means that {\em every word $w'$ equivalent to $w$ contains at least $8$ letters.}

The above mentioned complexity is similar to a ``crossing number'', though, crossings are treated
in a non-canonical way.

In~\cite{M2}, it is proved that for free knots (which are knot theoretic analogs of the group $G^2_n$) if a knot $K$ is complicated enough then it is contained (as a smoothing) in any knot equivalent to it.

In~\cite{M3}, similar statements are proved for $G^2_n$.

Let us now treat one more complicated issue, the unknotting number.

To make the issue simpler, let us start with the toy model.
Let $G'=\Z^{*3}$ be the free product of the three copies of $\Z$ with generators $a,b,c$.

Assume we are allowed to perform the operation of switching the sign $a\longleftrightarrow a^{-1}$,
$b\longleftrightarrow b^{-1}$, $c\longleftrightarrow c^{-1}$ along with the usual reduction
of opposite generators $aa^{-1},bb^{-1},cc^{-1},a^{-1}a,\dots$.

Given a word $v$; how many switches do we need to get from $v$ to the word representing the trivial
element? How to estimate this number from below?

For the word $w=abcbabca$ the answer is ``infinity''. It is impossible to get from $w$ to $1$ in $G'$
because $w$ represents a non-trivial element of $G$, and the operations $a\longleftrightarrow a^{-1}$
do not change the element of $G$ written in generators $a,b,c$.

Now, let the element of $G'$ be given by the word
$w'=a^{4}b^{2}c^{4}b^{-4}$.

This word $w'$ is trivial in $G$, however, if we look at exponents of $a,b,c$, we see that they are
$+4,-2,+4$. Thus, the number of switchings is bounded from below by $\frac 12(|4|+|2|+|4|)=5$, and one can easily find how to make the word $w'$ trivial in $G'$ with five switchings.

For classical braids, a crossing switching corresponds to one turn of one string of a braid around another. If $i$ and $j$ are the numbers of two strings of some braid $B$ then the word in $G^k_n$, $k=3,4$, which corresponds to the dynamical system describing the full turn of the string $i$ around the string $j$, looks like $c_{ij}^2$, where $c_{ij}=\prod_{m\supset\{i,j\}}a_m)$ and the product is taken over the subsets $m\subset\{1,2,\dots, n\}$ which have $k$ element and contain $i$ and $j$, given in some order. Note that the word $c_{ij}^2$ is even.

Fix a $k$-element subset $m\subset\{1,2,\dots, n\}$ containing $\{i,j\}$. Consider the homomorphism $\phi\colon H^k_n\to H$ determined by $m$. The word  $c_{ij}^2$ yields the factor $f_xf_{x+z_{ij}}\in H$ where the index $x\in Z$ depends on the positition of $c_{ij}^2$ inside the word $\beta$ of $G^k_n$ that corresponds to the braid $B$, and the element $z_{ij}=\sum_{m'\,:\, m'\supset\{i,j\}, Card(m\cap m')=k-1}\psi(m')\in Z$ depends only on $i$, $j$, and $m$ and does not depend on the order in the product $c_{ij}$. Hence, an addition of a full turn of the string $i$ around the string $j$ corresponds to substitution of the subword $1=f_x^2$ with the subword $f_xf_{x+z_{ij}}$, i.e. a crossing switch corresponds to switch of elements $f_x$ and $f_{x+z_{ij}}$ in the image of the braid under the homomorphism $\phi$. Thus, the following statement holds.

\begin{proposition}\label{prop:unknotting_number_estimation}
The unknotting number of a braid $B$ is estimated from below by the number of switches $f_x\mapsto f_{x+z_{ij}}$ which are necessary to make the word $\phi(B)\in H$ trivial.
\end{proposition}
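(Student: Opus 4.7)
The plan is to translate a sequence of crossing changes that unknots $B$ into a sequence of elementary switches on the image $\phi(B)\in H$, and then to count. First I would recall that the unknotting number $u(B)$ is by definition the minimum number of crossing changes needed to transform $B$ into the trivial braid. The paragraph immediately preceding the statement already identifies a single crossing change between the strands $i$ and $j$ with the insertion (or deletion, but $a_m^2=1$ makes the two operations equivalent) of a factor $c_{ij}^{2}$ somewhere in the word $\beta\in G^k_n$ corresponding to $B$.

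Next, I would track what this insertion does under the homomorphism $\phi\colon H^k_n\to H$. Because $c_{ij}^2\in H^k_n$, the modified braid still lies in the domain of $\phi$, and the explicit computation already carried out shows that inserting $c_{ij}^2$ at a site whose accumulated $\psi$-value is $x\in Z$ contributes precisely the factor $f_x f_{x+z_{ij}}$ to $\phi(B)$, where $z_{ij}$ depends only on $i$, $j$, and the fixed subset $m$. Equivalently, since $\phi$ is a homomorphism and $f_x^2=1$ in $H$, the net effect of one crossing change is to replace a trivially contractible pair $f_x\cdot f_x=1$ by $f_x\cdot f_{x+z_{ij}}$, which is exactly the elementary switch $f_x\mapsto f_{x+z_{ij}}$ of the statement.

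Chaining the two steps, if $B$ is unknotted by $N=u(B)$ crossing changes, then $\phi(B)$ can be turned into $\phi(1)=1$ in $H$ by the same number of elementary switches (together with free cancellations $f_y^2=1$, which are not counted). Hence the minimum number of switches required to trivialize $\phi(B)$ is at most $u(B)$, giving the asserted lower bound.

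The main obstacle I anticipate is bookkeeping inside $H$: one must check that the ``accumulated colour'' $x\in Z$ at the insertion site is well defined independently of the representative of $\beta$ chosen, and that inserting $c_{ij}^2$ between two arbitrary letters of $\beta$ really produces, after applying $\phi$, the clean adjacent pair $f_x f_{x+z_{ij}}$ without stray contributions from the rest of the word. All the necessary ingredients are contained in the computation displayed just before the proposition, so the remaining task is to thread them together and make the correspondence ``crossing change $\longleftrightarrow$ switch'' formally precise.
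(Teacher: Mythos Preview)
Your proposal is correct and matches the paper's own argument essentially line for line: the paper simply observes that a single crossing change inserts $c_{ij}^2$, that $\phi$ turns this into the factor $f_xf_{x+z_{ij}}$, i.e.\ one switch $f_x\mapsto f_{x+z_{ij}}$, and immediately concludes the proposition. Your bookkeeping worries are legitimate but are exactly what the displayed computation before the proposition already handles, so there is nothing further to add.
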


The number of switches in the Proposition~\ref{prop:unknotting_number_estimation} is not an explicit characteristic of a word in the group $H$, but one can give several rough estimates for it which can be computed straightforwardly. For example, consider the following construction. Let $\pi\colon H\to \Z_2[Z]$ be the natural projection; let $Z_0$ be the subgroup in $Z$ generated by the elements $z_{ij}, \{i,j\}\subset m$. For any element $\xi = \sum_{z\in Z}\xi_z z\in \Z_2[Z]$ and any $z\in Z$ consider the number $c_z(\xi)=Card\left(\{z_0\in Z_0\,|\, \xi_{z+z_0}\ne 0\}\right)$, and let $c(\xi)=\max_{z\in Z} c_z(\xi)$.

Let $\omega\in H$ be an arbitrary word and $\xi=\pi(\omega)$. Any switch $f_x\mapsto f_{x+z_{ij}}$ in $\omega$ corresponds to a switch of $\xi$. Note that these switches map any element $z\in Z$ into the class $z+Z_0\subset Z$. After a switch, two summands of $\xi$, that correspond to generators in $z+Z_0$, can annihilate. Thus, removing all the summands of $\xi$ by the generators in $z+Z_0$ takes at least $\frac 12 c_z(\xi)$ switches. Thus, we get the following estimate for the unknotting number.   

\begin{proposition}\label{prop:unknotting_number_estimation_rough}
The unknotting number of a braid $B$ is estimated from below by the number $\frac 12 c\left(\pi(\phi(B))\right)$.
\end{proposition}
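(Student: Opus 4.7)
The plan is to combine Proposition~\ref{prop:unknotting_number_estimation} with a coset-by-coset counting argument in the group algebra $\mathbb{Z}_2[Z]$, so as to translate the question out of the non-abelian group $H$ and into straightforward $\mathbb{Z}_2$-linear algebra. By the previous proposition, it suffices to show that any sequence of switches $f_x \mapsto f_{x+z_{ij}}$ which reduces the word $\phi(B) \in H$ to the identity has length at least $\tfrac{1}{2}\, c(\pi(\phi(B)))$.

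I would then fix such a sequence and track what it does to $\xi := \pi(\phi(B)) \in \mathbb{Z}_2[Z]$. Since $\pi$ is essentially the abelianization map (it kills the relations $f_x^2 = 1$ and forces the $f_x$ to commute), the free reductions and insertions of subwords $f_x^2$ performed inside $H$ leave $\xi$ invariant; only the switches themselves can alter $\xi$. A single switch replaces a subword $1 = f_x^2$ by $f_x f_{x+z_{ij}}$ (or the inverse move), hence changes $\xi$ by adding the element $[x] + [x+z_{ij}] \in \mathbb{Z}_2[Z]$.

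The key observation, already indicated in the paragraph preceding the statement, is that since $z_{ij} \in Z_0$ the adjustment $[x]+[x+z_{ij}]$ is supported on the single coset $x+Z_0$, and in fact flips exactly the two coefficients $\xi_x$ and $\xi_{x+z_{ij}}$. Consequently, for any fixed coset $C = z + Z_0$, the number of nonzero coefficients of $\xi$ supported in $C$ starts at $c_z(\xi)$, must end at $0$, is untouched by switches on other cosets, and changes by at most $2$ per switch that affects $C$. Hence at least $\tfrac{1}{2} c_z(\xi)$ switches are required to clear $C$ alone, and taking the maximum over $z \in Z$ gives the desired lower bound $\tfrac{1}{2} c(\xi)$.

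I do not foresee any serious obstacle. The one place that requires care is the clean verification that, among all the elementary moves inside $H$ (free reductions, insertions of $f_x^2$, and switches), only the switches contribute to $\pi$ — but this is immediate from the fact that $\pi$ factors through the abelianization. The remaining count is an elementary $\mathbb{Z}_2$-linear argument, and the bound is visibly sharp on each individual coset, so no further refinement of the estimate should be expected from this method.
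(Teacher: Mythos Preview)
Your proposal is correct and follows essentially the same route as the paper: project via $\pi$ to $\mathbb{Z}_2[Z]$, observe that each switch alters at most two coefficients within a single coset of $Z_0$, and conclude that clearing the coset $z+Z_0$ costs at least $\tfrac12 c_z(\xi)$ switches. Your explicit remark that free reductions and insertions of $f_x^2$ leave $\pi$ invariant (because $\pi$ factors through the abelianization) is a helpful clarification that the paper leaves implicit, but the argument is otherwise the same.
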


Consider the braid $B$ in Fig.~\ref{fig:braid_example}. Its unknotting number is $2$. The braid corresponds to the element
$$\beta=a_{123}a_{234}a_{123}a_{134}a_{123}a_{134}a_{123}a_{234}\in G^3_4.$$
Let $m=\{1,2,3\}$. Then $z_{12}=\psi(a_{124})=e_1+e_2\in Z=\Z_2^{\oplus 2}$, $z_{13}=\psi(a_{134})=e_2$, $z_{23}=\psi(a_{234})=e_1$.

The image of the element $\beta$ is $\phi(\beta)=f_{0}f_{e_1}f_{e_1+e_2}f_{e_1}$. It is easily to see the word $\phi(\beta)$ can not be trivialized with one switch, but it can be made trivial with two switches:
$$
f_{0}f_{e_1}f_{e_1+e_2}f_{e_1}\stackrel{z_{13}}{\longrightarrow}f_{0}f_{e_1}f_{e_1}f_{e_1}=f_{0}f_{e_1}\stackrel{z_{23}}{\longrightarrow}f_{0}f_{e_0}=1.
$$
Note that the Proposition~\ref{prop:unknotting_number_estimation_rough} gives the estimate $$\frac 12 c(\pi(\phi(\beta)))=\frac 12 c(f_{0}+f_{e_1+e_2})=1.$$

 \begin{figure}
  \centering
    \includegraphics[width=0.25\textwidth]{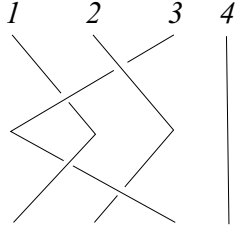}
  \caption{Braid with the unknotting number equal to $2$}\label{fig:braid_example}
 \end{figure}

\section*{Appendix}
{\scriptsize % begin of appendix

\begin{proof}(Proof of Proposition~\ref{prop:hom_G4n})

\begin{lemma}\label{lem:bifurcation_condition_on_graphics}
Let $A_i=(x_i,x^2_i), i=0,1,2,3$ be different points in the graph $\Gamma$. Then $A_0,A_1,A_2,A_3$ belong to one circle if and only if the sum $x_0+x_1+x_2+x_3=0$.
\end{lemma}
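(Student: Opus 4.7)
The plan is to reduce the concyclicity condition to a statement about the roots of a degree-four polynomial via Vieta's formulas. A circle (or line) in $\R^2$ has a general equation of the form $x^2+y^2+Dx+Ey+F=0$; substituting the parametrization $y=x^2$ of $\Gamma$ turns this into the polynomial equation
\begin{equation*}
p(x)=x^4+(1+E)x^2+Dx+F=0,
\end{equation*}
a monic quartic in $x$ with vanishing cubic coefficient. A point $(t,t^2)\in\Gamma$ lies on the circle precisely when $t$ is a root of $p$. This algebraic reformulation is where the proof lives; after that, everything is Vieta.

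For the forward direction, I would assume $A_0,A_1,A_2,A_3$ lie on a common circle with equation as above. Then each $x_i$ is a root of $p(x)$. Since the four $x_i$ are distinct and $p$ has degree $4$, they are all the roots, so
\begin{equation*}
x_0+x_1+x_2+x_3=-\frac{\text{coefficient of }x^3}{\text{leading coefficient}}=0.
\end{equation*}

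For the converse, I would start from the distinct numbers $x_0,x_1,x_2,x_3$ with vanishing sum and form the polynomial $q(x)=(x-x_0)(x-x_1)(x-x_2)(x-x_3)$. The hypothesis $x_0+x_1+x_2+x_3=0$ says exactly that the cubic coefficient of $q$ vanishes, so $q(x)=x^4+\alpha x^2+\beta x+\gamma$ for some real $\alpha,\beta,\gamma$. Setting $E:=\alpha-1$, $D:=\beta$, $F:=\gamma$, the conic $x^2+y^2+Dx+Ey+F=0$ meets $\Gamma$ exactly in the zero set of $q$, hence contains all four $A_i$.

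The one subtlety is to verify that this conic is genuinely a circle, not a degenerate locus. The only conics of the form $x^2+y^2+Dx+Ey+F=0$ are circles, single points, or the empty set; since our conic contains four distinct real points, the last two possibilities are excluded. One should also observe that a line cannot be substituted in place of a circle here, because the parabola $\Gamma$ meets any line in at most two points, so four distinct points of $\Gamma$ are automatically non-collinear. I expect no real obstacle in this lemma; the main thing to be careful about is precisely this degeneracy check, so that the equivalence is stated for an honest circle through four distinct points of $\Gamma$.
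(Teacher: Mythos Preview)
Your proof is correct and complete, but it follows a genuinely different route from the paper's.

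The paper writes the circle in centre--radius form $(x-a)^2+(y-b)^2=r^2$, substitutes $y=x_i^2$ for each $i$, subtracts the equation for $i=0$ from the other three, and obtains an over-determined linear system in $(a,b)$. Its compatibility determinant factors as
\[
\Delta=\prod_{i<j}(x_i-x_j)\cdot(x_0+x_1+x_2+x_3),
\]
so with the $x_i$ distinct the solvability of the system is equivalent to $\sum x_i=0$.

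Your approach instead parametrises the intersection $\Gamma\cap\{x^2+y^2+Dx+Ey+F=0\}$ as the zero set of the monic quartic $x^4+(1+E)x^2+Dx+F$ and reads off the concyclicity condition from the vanishing cubic coefficient via Vieta. This is cleaner conceptually: it explains \emph{why} the linear combination $x_0+x_1+x_2+x_3$ is the relevant one (it is literally the absent $x^3$-coefficient), and the converse comes for free by reverse-engineering $D,E,F$ from the elementary symmetric functions. The paper's determinant computation is more hands-on and hides this structure, though it has the minor advantage of not needing the separate non-degeneracy check you (correctly) supply at the end. Either argument is perfectly adequate for the lemma.
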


\begin{proof}
Assime that $A_0, A_1, A_2, A_3$ belong to the circle with the center $(a,b)$ and the radius $r$ for some $a,b,r$. Then the following equations hold:
$(x_i-a)^2+(x^2_i-b)^2=r^2$, $i=0,\dots,3$. Subtracting the first equation from the last three equations, we obtain a linear system on variables $a$ and $b$
with three equations $2(x_0-x_i)a+2(x_0^2-x_i^2)b+(x_i^2-x_0^2+x_i^4-x_0^4)=0$, $i=1,2,3$. The combatibility equation for this system is $\Delta=0$ where
$$
\Delta = \left|\begin{array}{ccc}
x_0-x_1 & x_0^2-x_1^2 & x_1^2-x_0^2+x_1^4-x_0^4\\
x_0-x_2 & x_0^2-x_2^2 & x_2^2-x_0^2+x_2^4-x_0^4\\
x_0-x_3 & x_0^2-x_3^2 & x_3^2-x_0^2+x_3^4-x_0^4
\end{array}\right|
$$
is the determinant of the system. But

\begin{multline*}
\Delta = \left|\begin{array}{ccc}
x_0-x_1 & x_0^2-x_1^2 & x_1^4-x_0^4\\
x_0-x_2 & x_0^2-x_2^2 & x_2^4-x_0^4\\
x_0-x_3 & x_0^2-x_3^2 & x_3^4-x_0^4
\end{array}\right| =\\ (x_0-x_1)(x_0-x_2)(x_0-x_3)(x_1-x_2)(x_1-x_3)(x_2-x_3)(x_0+x_1+x_2+x_3).
\end{multline*}
Hence, the condition $\Delta=0$ is equivalent to the equality $x_0+x_1+x_2+x_3=0$.
\end{proof}

\begin{corollary}\label{cor:no_additional_points}
Let $\Gamma_+=\{ (t,t^2)\,|\, t\ge 0\}$ be the positive half of the graphics $\Gamma$ and $X=\{P_1,P_2,\dots,P_n\}\subset\Gamma_+, 4\le n,$ be a finite subset.
Denote the circle which contains $3$ different points $P_i,P_j,P_k$ of $X$ by $C_{ijk}$. Then
$$\Gamma_+\cap\bigcup_{1\le i<j<k\le n}C_{ijk} = X.$$
\end{corollary}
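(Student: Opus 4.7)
The plan is to deduce this directly from Lemma~\ref{lem:bifurcation_condition_on_graphics} together with the sign constraint coming from restricting to $\Gamma_+$.

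First I would handle the trivial inclusion $X \subseteq \Gamma_+ \cap \bigcup_{i<j<k} C_{ijk}$: since $n\ge 4$, for any fixed index $i$ we can pick two further distinct indices $j,k$, and then $P_i$ lies on $C_{ijk}$ by definition.

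For the nontrivial inclusion, let $Q = (s,s^2) \in \Gamma_+ \cap C_{ijk}$ for some triple $i<j<k$. If $Q \in \{P_i,P_j,P_k\}$ there is nothing to show, so assume $Q$ is distinct from all three. Then $Q, P_i, P_j, P_k$ are four pairwise distinct points of $\Gamma$ lying on a common circle, and writing $P_\ell = (t_\ell, t_\ell^2)$, Lemma~\ref{lem:bifurcation_condition_on_graphics} gives
\[
s + t_i + t_j + t_k = 0.
\]
But $s, t_i, t_j, t_k \ge 0$ since all four points lie in $\Gamma_+$, and they are pairwise distinct, so at most one of them can equal $0$; hence the left-hand side is strictly positive, a contradiction. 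Therefore $Q \in \{P_i, P_j, P_k\} \subseteq X$.

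There is no substantial obstacle: the Lemma does all the geometric work by turning concyclicity into a linear equation on $x$-coordinates, and restricting to the positive branch $\Gamma_+$ immediately rules out any fourth intersection point by a sign argument. The only thing to be slightly careful about is distinctness of the points, which is what forces the sum to be \emph{strictly} positive rather than just nonnegative.
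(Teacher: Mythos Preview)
Your proof is correct and follows essentially the same route as the paper's: both apply Lemma~\ref{lem:bifurcation_condition_on_graphics} to force the fourth intersection point to have negative $x$-coordinate. If anything you are slightly more careful, since you explicitly verify the easy inclusion $X\subseteq\Gamma_+\cap\bigcup C_{ijk}$ (using $n\ge 4$) and you justify the \emph{strict} positivity of $t_i+t_j+t_k$ via distinctness, whereas the paper leaves both of these implicit.
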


\begin{proof}
For any $i<j<k$ one has $C_{ijk}\cap\Gamma = \{P, P_i,P_j,P_k\}$ where $P=(t,t^2)$, $P_i=(t_i,t_i^2)$, $P_j=(t_j,t_j^2)$, $P_k=(t_k,t_k^2)$. Since $t_i,t_j,t_k\ge 0$ and
$t+t_i+t_j+t_k=0$, then $t<0$ and $P\not\in\Gamma_+$. Hence, $C_{ijk}\cap\Gamma_+ = \{P_i,P_j,P_k\}$.
\end{proof}

The corollary means that there is no additional intersections of the circles generated by the points of $X$ with $\Gamma_+$. Then in the dynamical systems which corresponds to the pure braid $b_{ij}$, a configuration of four points lying on one circle can occur only when point $i$ (or $j$) leaves the curve $\Gamma_+$. If that configuration appears when the point $i$ moves above the point $P_k, i<k\le j,$ it means that point $i$, $P_k$ and some other two points $P_l$ and $P_m$  lie on the same circle, i.e. point $i$ belongs to the circle $C_{klm}$. We assume below that $t_l<t_m$.

We need to find the order the point $i$ intersects circles $C_{klm}$ when moving near $P_k$. We can substitute circles with their tangent lines at $P_k$ under assumption the point $i$ moves close to $P_k$.

Given points $P_k,P_l,P_m\in\Gamma_+$ there are three possible cases:
\begin{enumerate}
\item $t_l<t_m<t_k$;
\item $t_l<t_k<t_m$;
\item $t_k<t_l<t_m$.
\end{enumerate}

 \begin{figure}
  \centering

    \includegraphics[width=0.25\textwidth]{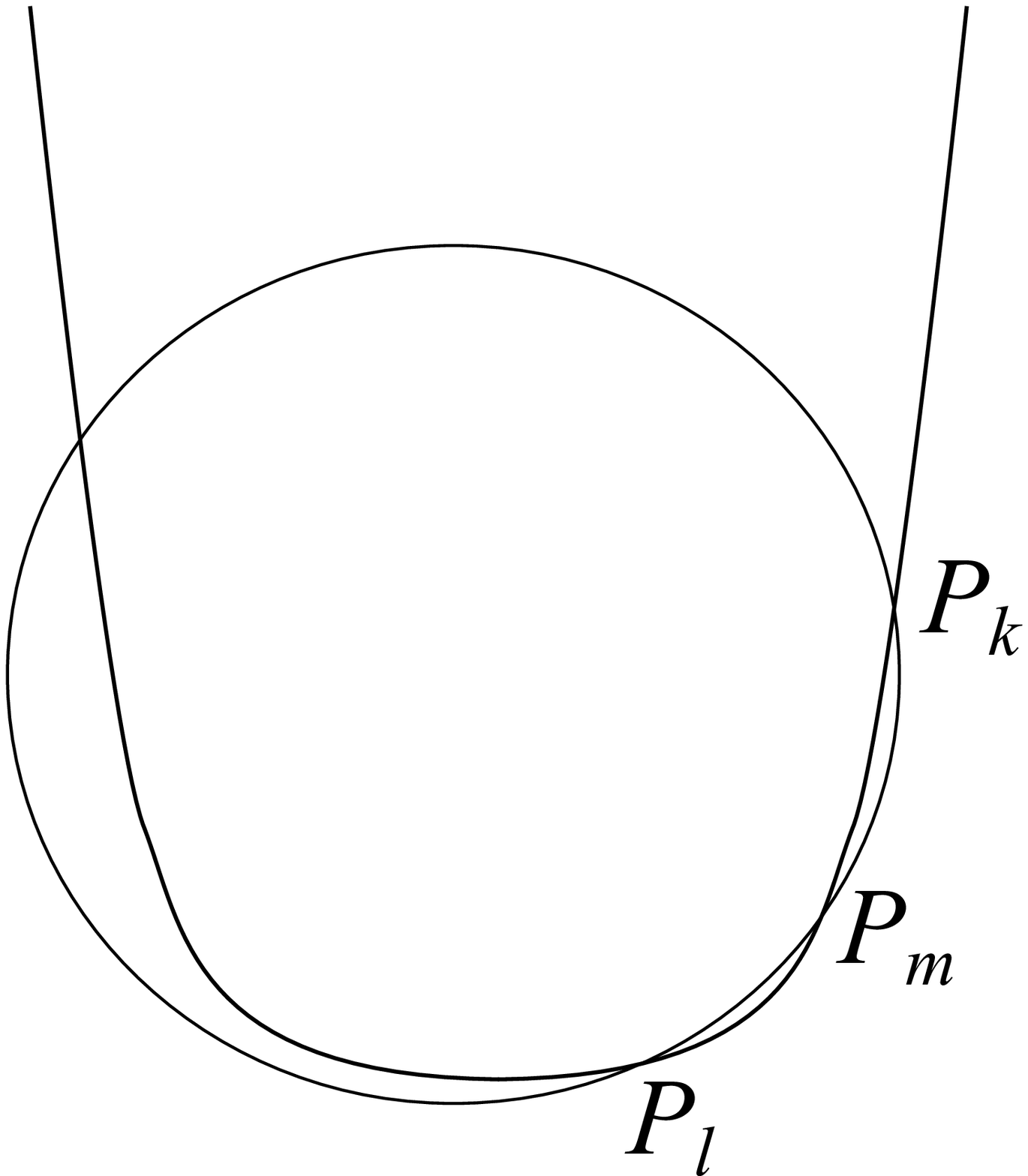} \quad
    \includegraphics[width=0.25\textwidth]{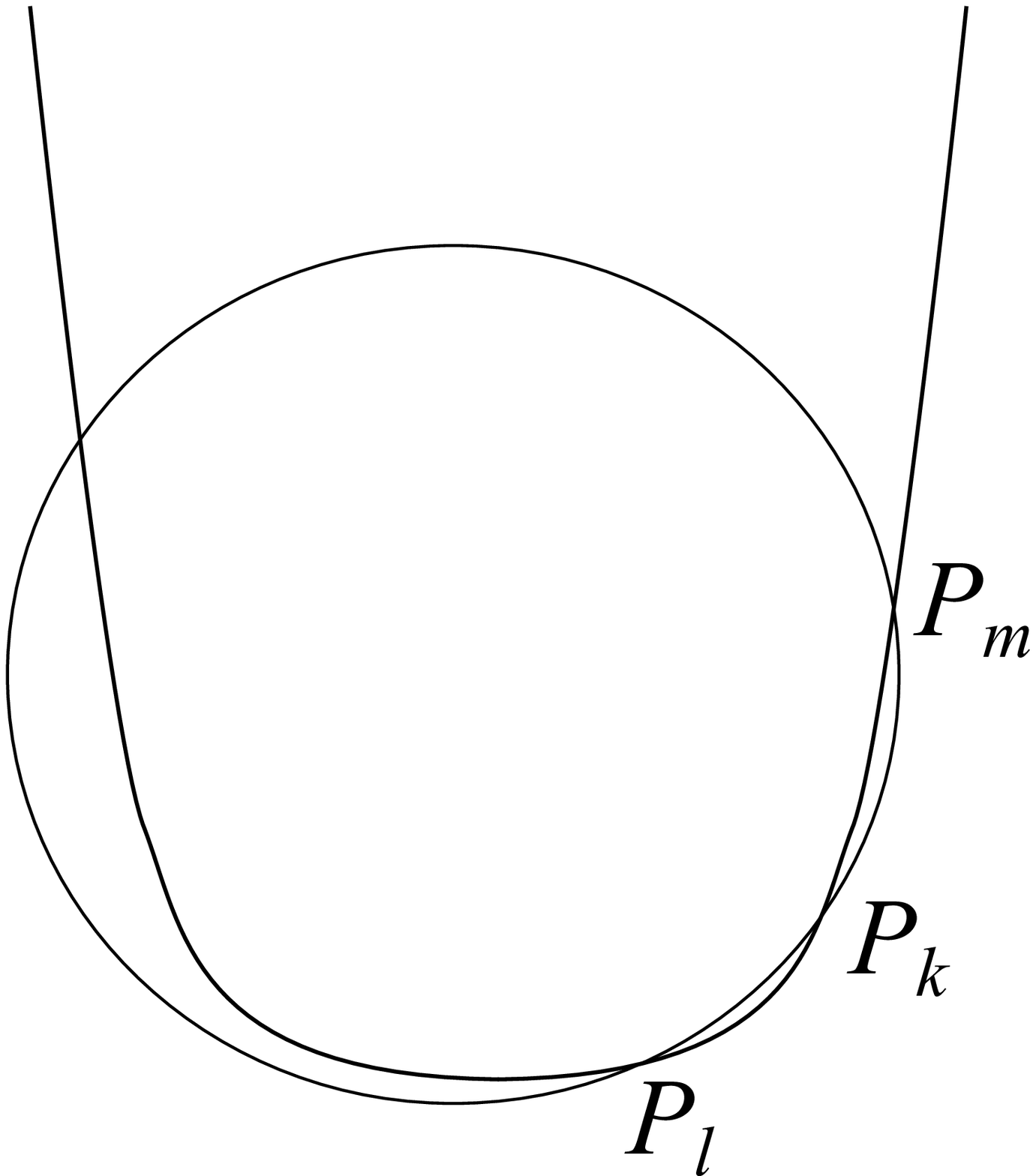} \quad
    \includegraphics[width=0.25\textwidth]{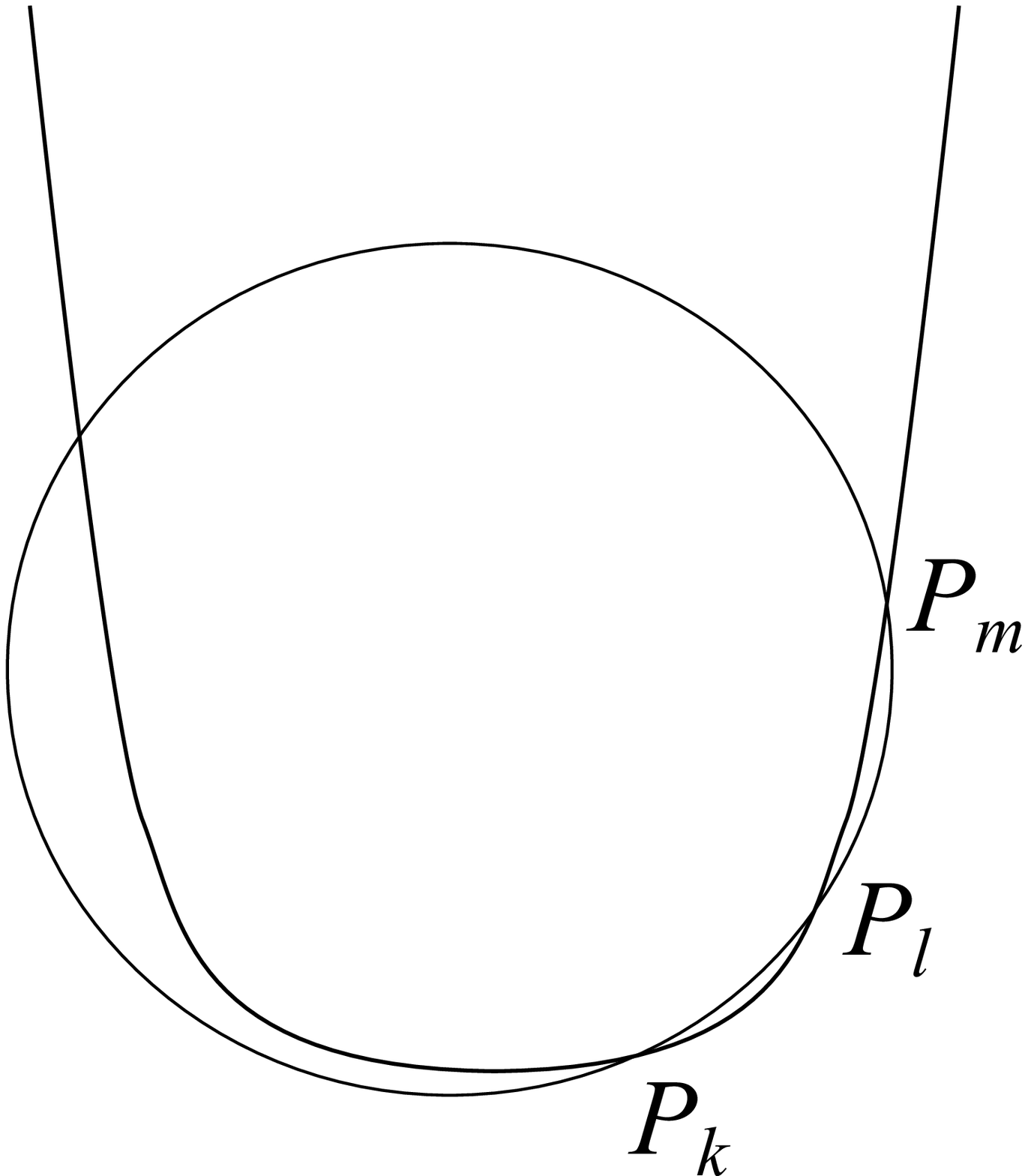}

  \caption{Dynamic system which corresponds to $b_{ij}$}\label{fig:circle_cases}
 \end{figure}

%Let $Q$ be the fourth intersection
Case $t_l<t_m<t_k$. The following statement can be obtained by straightforward computation.

\begin{lemma}\label{lem:slope_formula}
The slope of the tangent line of the circle $C_{klm}$, which contains the points $P_k=(t_k,t_k^2)$, $P_l=(t_l,t_l^2)$ and $P_m=(t_m,t_m^2)$, is equal to
\begin{equation}\label{eq:slope_formula}
\kappa_{k,lm}=-\frac{t_k^2(t_l+t_m)+t_k((t_l+t_m)^2+2)+t_l t_m(t_l+t_m)}{t_k^2-t_k(t_l+t_m)-(t_l^2+t_l t_m+t_m^2+1)}
\end{equation}
at the point $P_k$.
\end{lemma}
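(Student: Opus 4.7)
The plan is to write the circumscribed circle $C_{klm}$ in the general form $x^2+y^2+Dx+Ey+F=0$, solve for the coefficients $D$ and $E$ in terms of $t_k,t_l,t_m$ using the fact that the three given points lie on $\Gamma$, and then read off the slope at $P_k$ by implicit differentiation.

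First, substituting a point $(t,t^2)\in\Gamma$ into the circle equation yields the quartic
$$t^4+(1+E)t^2+Dt+F=0,$$
whose roots are precisely the $t$-coordinates of the intersections $C_{klm}\cap\Gamma$. Three of these roots are $t_k,t_l,t_m$; since the coefficient of $t^3$ vanishes, Vieta's formulas force the fourth root to be $t_0=-(t_k+t_l+t_m)$, which of course agrees with Lemma~\ref{lem:bifurcation_condition_on_graphics}. This reassures us that the quartic viewpoint is consistent.

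Second, applying the remaining Vieta relations (sum of products of pairs $=1+E$, sum of products of triples $=-D$) and eliminating $t_0$, a short symmetric manipulation gives the compact expressions
$$D=(t_l+t_m)(t_k+t_l)(t_k+t_m),\qquad E=-1-(t_k^2+t_l^2+t_m^2)-(t_kt_l+t_kt_m+t_lt_m).$$
Third, implicit differentiation of $x^2+y^2+Dx+Ey+F=0$ yields $y'=-(2x+D)/(2y+E)$, and substituting $x=t_k$, $y=t_k^2$ together with the above values of $D,E$ produces the slope. Expanding the factorised form of $D$ as $t_k^2(t_l+t_m)+t_k(t_l+t_m)^2+t_lt_m(t_l+t_m)$ and adding the $2t_k$ from the numerator gives exactly $t_k^2(t_l+t_m)+t_k((t_l+t_m)^2+2)+t_lt_m(t_l+t_m)$, matching the numerator of \eqref{eq:slope_formula}; the denominator $2t_k^2+E$ simplifies directly to $t_k^2-t_k(t_l+t_m)-(t_l^2+t_lt_m+t_m^2+1)$.

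The only real difficulty is the bookkeeping in the symmetric-function manipulation, which is routine but must be done carefully so that the factorisation $D=(t_l+t_m)(t_k+t_l)(t_k+t_m)$ is recognised; without that factorisation the slope formula looks unrelated to \eqref{eq:slope_formula}. Once $D$ and $E$ are in hand, the rest is a one-line implicit differentiation and a straightforward expansion.
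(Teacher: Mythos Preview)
Your argument is correct. The paper itself gives no proof beyond the phrase ``by straightforward computation,'' and your route via the general circle equation $x^2+y^2+Dx+Ey+F=0$, Vieta's formulas on the quartic $t^4+(1+E)t^2+Dt+F=0$, and implicit differentiation is exactly a clean execution of that computation; the factorisation $D=(t_l+t_m)(t_k+t_l)(t_k+t_m)$ and the value of $E$ check out, and the final substitution reproduces \eqref{eq:slope_formula} verbatim.
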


If the sequence $t_i, 1\le i\le n,$ grows rapidly then the coefficient $\kappa_{k,lm}$ is close to the sum $-(t_l+t_m)$. More precisely, we have the following estimate.

\begin{lemma}\label{lem:slope_estimate}
Let $t_1\ge 1$ and
\begin{equation}\label{eq:growth_condition_case1}
t_i\ge 100 t_{i-1}^2
\end{equation}
 for any $i, 1<i\le n$. Then for any $1\le l<m<k\le n$ we have
$$-(t_l+t_m+1)<\kappa_{k,lm}<-(t_l+t_m).
$$
\end{lemma}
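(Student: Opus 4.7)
The plan is to reduce everything to a single algebraic identity for the quantity $\kappa_{k,lm}+(t_l+t_m)$ and then compare growth rates using the hypothesis $t_k\ge 100\,t_{k-1}^2$. Write $s=t_l+t_m$ and $p=t_l t_m$; the claim $-(s+1)<\kappa_{k,lm}<-s$ is the same as $-\kappa_{k,lm}-s\in(0,1)$. So first I would compute this difference directly from formula~\eqref{eq:slope_formula}. After combining over a common denominator the terms $t_k^2 s$ and $t_k s^2$ cancel, and one is left, unless I am mistaken, with the clean expression
\begin{equation*}
-\kappa_{k,lm}-s \;=\; \frac{(2t_k+s)(s^2+1)}{t_k^2-t_k s-(s^2-p+1)}.
\end{equation*}
Verifying this identity is a short algebraic manipulation and should be carried out first; from then on everything is an inequality check.

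Next I would establish the quantitative consequences of the growth hypothesis. Since $m<k$ we have $t_m\le t_{k-1}$, so \eqref{eq:growth_condition_case1} gives $t_k\ge 100\,t_m^2$. Because $t_l\le t_m$, one has $t_m\ge s/2$, hence $t_k\ge 25\,s^2$. Since $t_l\ge t_1\ge 1$ we also have $s\ge 2$, so $t_k\ge 25 s^2\ge 50 s\gg s$. These two inequalities ($t_k\ge 25 s^2$ and $t_k\gg s$) are the only facts I need in the sequel.

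With these in hand, positivity is immediate: the numerator $(2t_k+s)(s^2+1)$ is positive, while the denominator equals $t_k(t_k-s)-(s^2-p+1)$ and is bounded below by $t_k^2/2$ because $t_k\ge 50 s$ and $s^2-p+1\le s^2+1\ll t_k^2$. For the upper bound $-\kappa_{k,lm}-s<1$, clearing the (positive) denominator reduces the claim to
\begin{equation*}
t_k^2 \;>\; 2 t_k s^2+t_k s+2 t_k+s^3+s^2+s+1-p.
\end{equation*}
Using $t_k\ge 25 s^2$ one estimates $t_k^2\ge 25 s^2 t_k$, so $t_k^2-2 t_k s^2\ge 23 s^2 t_k$; this already dominates $t_k s+2 t_k$ by a wide margin (since $23 s^2\ge 92>s+2$), and the remaining $O(s^3)$ terms are comfortably absorbed because $t_k\ge 25 s^2$ makes $23 s^2 t_k\ge 575 s^4$. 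A one-line bookkeeping suffices.

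The only possible obstacle is the algebraic identity in the first step; I expect the cancellation between the $t_k^2 s$ terms of numerator and denominator to make the computation manageable, but it is the single place where an arithmetic slip would invalidate the argument. After that, the constant $100$ in the growth condition is deliberately loose, so the remaining inequality work is routine and does not require delicate estimates.
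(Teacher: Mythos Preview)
Your argument is correct and in fact cleaner than the paper's. The paper bounds the numerator and denominator of~\eqref{eq:slope_formula} separately: it shows the denominator exceeds $t_k^2\bigl(1-\tfrac{6}{100t_m}\bigr)$ and the numerator is below $t_k^2\bigl(t_l+t_m+\tfrac{7}{100}\bigr)$, then multiplies the estimates and expands $\bigl(t_l+t_m+\tfrac{7}{100}\bigr)\bigl(1+\tfrac{7}{100t_m}\bigr)$ to reach the upper bound. Your route---computing the exact identity
\[
-\kappa_{k,lm}-s=\frac{(2t_k+s)(s^2+1)}{t_k^2-t_ks-(s^2-p+1)}
\]
first---performs the cancellation of the leading $t_k^2 s$ term symbolically rather than through matched multiplicative error factors, so the remaining inequality work is more transparent and less sensitive to the particular constant $100$. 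The identity itself checks out: with $s=t_l+t_m$, $p=t_lt_m$, and $t_l^2+t_lt_m+t_m^2=s^2-p$, the numerator of the difference collapses to $2t_k(s^2+1)+s(s^2+1)$ exactly as you claim.

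One cosmetic point: the phrase ``$23s^2\ge 92>s+2$'' is literally false once $s>90$; what you need (and what is obviously true for $s\ge 2$) is simply $23s^2>s+2$, after which splitting off, say, a single $s^2t_k\ge 25s^4$ term handles the residual $s^3+s^2+s+1$. Fix that wording and the write-up is complete.
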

\begin{proof}
The numerator in the formula~\eqref{eq:slope_formula} is greater than $t_k^2(t_l+t_m)$ and the denominator is less than $t_k^2$. Hence, $\kappa_{k,lm}<-(t_l+t_m)$. On the other hand,
$$
t_k^2-t_k(t_l+t_m)-(t_l^2+t_l t_m+t_m^2+1)> t_k^2-6 t_k t_m\ge t_k^2 \left( 1-\frac 6{100 t_m}\right),
$$
and since $\left( 1-\frac 6{100 t_m}\right)\left( 1+\frac 7{100 t_m}\right)=1+\frac 1{100 t_m}-\frac {42}{10000 t_m^2}>1$, we have $$\left( 1-\frac 6{100 t_m}\right)^{-1}<1+\frac 7{100 t_m}.$$ Moreover,
$$ t_k^2(t_l+t_m)+t_k((t_l+t_m)^2+2)+t_l t_m(t_l+t_m)<t^2_k \left(t_l+t_m+\frac {7t_m^2}{t_k}\right)<t^2_k \left(t_l+t_m+\frac {7}{100}\right)
$$
since $(t_l+t_m)^2+2< (2t_m)^2+t_m^2=5t_m^2$ and $t_l t_m(t_l+t_m)<2t_kt_m^2$. Then
\begin{multline*}
-\kappa_{k,lm}<\frac{t^2_k \left(t_l+t_m+\frac {7}{100}\right)}{t_k^2 \left( 1-\frac 6{100 t_m}\right)}<\left(t_l+t_m+\frac {7}{100}\right)\left(1+\frac 7{100 t_m}\right)=\\ t_l+t_m+\frac {7t_l+14t_m}{100 t_m}+\frac {49}{10000 t_m}<t_l+t_m+\frac {21}{100}+\frac {49}{10000}<t_l+t_m+1
\end{multline*}
\end{proof}

\begin{corollary}\label{cor:case1_order}
Let $t_1\ge 1$ and $t_i\ge 100 t_{i-1}^2$ for any $i, 1<i\le n$. Then for any indices $k,l_1,l_2,m_1,m_2$ such that $1\le l_1<m_1<k\le n$ and $1\le l_2<m_2<k$ the inequality $\kappa_{k,l_1m_1}>\kappa_{k,l_2m_2}$ holds if and only if $m_1<m_2$ or $m_1=m_2, l_1<l_2$.
\end{corollary}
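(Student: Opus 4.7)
The plan is to use Lemma~\ref{lem:slope_estimate} to reduce the comparison of slopes $\kappa_{k,lm}$ to the much simpler comparison of the sums $t_l + t_m$, and then exploit the rapid growth hypothesis $t_i \ge 100 t_{i-1}^2$ to show that these sums are ordered in the claimed lexicographic way.

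First, I would record the following sandwich trick: by Lemma~\ref{lem:slope_estimate}, any slope $\kappa_{k,lm}$ satisfies $-(t_l+t_m+1) < \kappa_{k,lm} < -(t_l+t_m)$. Consequently, whenever $t_{l_2}+t_{m_2} \ge t_{l_1}+t_{m_1}+1$, one gets
\[
\kappa_{k,l_2m_2} < -(t_{l_2}+t_{m_2}) \le -(t_{l_1}+t_{m_1}+1) < \kappa_{k,l_1m_1},
\]
so it suffices to establish this integer-like gap between sums.

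Next, I would prove the forward direction by splitting into the two cases of the hypothesis. In the case $m_1 < m_2$, so $m_2 \ge m_1+1$, the growth condition gives $t_{m_2} \ge 100 t_{m_1}^2 \ge 100 t_{m_1}$, while $t_{l_1}+t_{m_1}+1 \le 2t_{m_1}+1 \le 3t_{m_1}$ (using $t_i \ge 1$ throughout, which follows from $t_1 \ge 1$ and monotonicity). Hence $t_{l_2}+t_{m_2} \ge t_{m_2} \ge 100 t_{m_1} \ge 3t_{m_1} \ge t_{l_1}+t_{m_1}+1$. In the case $m_1=m_2$ and $l_1<l_2$, the sum difference reduces to comparing $t_{l_2}$ with $t_{l_1}+1$, and the same growth estimate $t_{l_2} \ge 100 t_{l_1} \ge t_{l_1}+1$ finishes this case.

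For the converse, I would use a symmetry argument: the hypothesis that the inequality $\kappa_{k,l_1m_1}>\kappa_{k,l_2m_2}$ holds, combined with the forward direction applied to the reversed pair, rules out $m_1 > m_2$ and $(m_1=m_2,\ l_1>l_2)$; the degenerate case $(l_1,m_1)=(l_2,m_2)$ is excluded because strict inequality then fails trivially. No serious obstacle is anticipated — the only delicate point is verifying that the factor $100$ in the growth assumption is indeed comfortable enough to absorb the additive slack introduced by the $\pm 1$ in Lemma~\ref{lem:slope_estimate}, and the arithmetic above shows this with room to spare.
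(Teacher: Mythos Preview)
Your argument is correct and follows essentially the same route as the paper: both apply the sandwich bounds of Lemma~\ref{lem:slope_estimate} and then use the growth hypothesis $t_i\ge 100\,t_{i-1}^2$ to separate the sums $t_l+t_m$ by more than~$1$, treating the cases $m_1<m_2$ and $m_1=m_2,\ l_1<l_2$ separately. You additionally spell out the converse via symmetry, which the paper leaves implicit.
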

\begin{proof}
If  $m_1<m_2$ then
$$\kappa_{k,l_2m_2}<-(t_{l_2}+t_{m_2})<-100t_{m_1}^2<-(2t_{m_1}+1)<\kappa_{k,l_1m_1}.$$
If $m_1=m_2$ and $l_1<l_2$ then
$$\kappa_{k,l_2m_2}<-(t_{l_2}+t_{m_2})<-(100t_{l_1}^2+t_{m_1})<-(t_{l_1}+t_{m_1}+1)<\kappa_{k,l_1m_1}.$$
\end{proof}

Corollary~\ref{cor:case1_order} determines the order the point $i$ intersects the circles $C_{klm}$. Since $\kappa_{k,lm}<0$ the tangent line to $C_{klm}$ that lies above the graphics $\Gamma_+$, belongs to the second quadrant of the plane. The points $i$ intersects the circles according the decreasing order of the coefficients $\kappa_{k,lm}$. Thus, the order on the circles will be $$C_{k,1,2}, C_{k,1,3}, C_{k,2,3},C_{k,1,4},\dots, C_{k,k-2,k-1}.$$

 Case $t_l<t_k<t_m$.

 Then the circle $C_{klm}$ goes between the chord $P_lP_k$ and the graphics $\Gamma_+$ (see fig.~\ref{fig:circle_cases} middle).  It means the tangent line which lies above $\Gamma_+$, belongs to the third quadrant of the plane.

 Let $r_{klm}$ be the radius of the circle $C_{klm}$ and let $R_p, 3\le p\le n,$ be equal to $\max_{k,l,m\le p}r_{klm}$. Let $\alpha_p, 3\le p\le n,$ be equal to the minimal angle $$\min_{u,v,w\le p, u\ne w}\angle P_uP_vP_w.$$

 \begin{lemma}\label{lem:case2_growth_condition}
 Let $t_1\ge 1$ and $t_i\ge t_{i-1}$ for any $i, 1<i\le n$ and
 \begin{equation}\label{eq:growth_condition_case23}
 t_i\ge\max\left(\frac 3{\sin\alpha_{i-1}}t_{i-1}^2, t_{i-1}^2+2R_{i-1}\right)
 \end{equation}
  for $i>3$. Then for any indices $k,l_1,l_2,m_1,m_2$ such that $1\le l_1<k<m_1\le n$ and $1\le l_2<k<m_2\le n$ we have
\begin{enumerate}
\item if $l_1<k-1$ then the arc $P_{l_1}P_k$ of the circle $C_{kl_1m_1}$ lies in the angle $\angle P_{l_1}P_kP_{l_1+1}$;
\item if $m_1<m_2$ then $r_{kl_1m_1}<r_{kl_2m_2}$;
\item $\kappa_{k,l_1m_1}>\kappa_{k,l_2m_2}$ if and only if $l_1>l_2$ or $l_1=l_2, m_1<m_2$, where $\kappa_{k,l_sm_s}, s=1,2,$ is the slope of the circle $C_{kl_sm_s}$ at the point $P_k$.
\end{enumerate}
 \end{lemma}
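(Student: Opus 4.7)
My plan is to organize the proof around the observation that under the rapid--growth hypothesis each circle $C_{kl_sm_s}$ has a very large radius (driven by $t_{m_s}$), so each arc from $P_{l_s}$ to $P_k$ is almost indistinguishable from its chord. My first move is to extract an asymptotic expansion of $\kappa_{k,lm}$ directly from Lemma~\ref{lem:slope_formula} (whose formula is symmetric in $l,m$ and so applies verbatim to the present case $t_l<t_k<t_m$): rewriting its numerator as $(t_k+t_l)(t_k+t_m)(t_l+t_m)+2t_k$ and dividing by the denominator (viewed as a quadratic in $t_m$) yields the identity
\[
\kappa_{k,lm}=(t_k+t_l)+\frac{(t_k-t_l)\big[(t_k+t_l)^2+1\big]}{t_m^2+(t_k+t_l)t_m+(t_l^2+t_kt_l+1-t_k^2)},
\]
in which the correction term is positive (since $t_k>t_l$) and strictly decreasing in $t_m$.

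From this identity Part~(3) becomes essentially a sorting argument. If $l_1>l_2$, then the gap $t_{l_1}-t_{l_2}\ge 1$ in the leading term dominates the corrections, because the growth condition forces $t_{m_s}\ge t_k^2$, which bounds the corrections by a constant much less than $t_{l_1}-t_{l_2}$. If $l_1=l_2$ the leading terms agree and the order of $\kappa_{k,l,m_1}$ and $\kappa_{k,l,m_2}$ is decided purely by the monotone correction, giving $m_1<m_2\Longrightarrow \kappa_{k,l,m_1}>\kappa_{k,l,m_2}$.

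For Part~(2) I would derive a closed form for the circumradius of three points of $\Gamma$: combining the side lengths $|P_iP_j|=|t_i-t_j|\sqrt{1+(t_i+t_j)^2}$, the parabolic--triangle area identity $\mathrm{area}(P_kP_lP_m)=\tfrac12(t_k-t_l)(t_m-t_l)(t_m-t_k)$, and the classical formula $r=(\text{product of side lengths})/(4\,\mathrm{area})$, one obtains
\[
r_{klm}=\tfrac12\sqrt{(1+(t_k+t_l)^2)(1+(t_k+t_m)^2)(1+(t_l+t_m)^2)}.
\]
The condition $t_i\ge t_{i-1}^2+2R_{i-1}$ is then exactly what is needed: on one hand $r_{kl_1m_1}\le R_{m_1}\le \tfrac12(t_{m_1+1}-t_{m_1}^2)\le \tfrac12 t_{m_2}$, while on the other $r_{kl_2m_2}\ge \tfrac12|P_kP_{m_2}|\ge \tfrac14 t_{m_2}^2$, and the comparison closes because $t_{m_2}\ge 2$.

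For Part~(1) I would first identify the relevant arc: since $P_{m_1}$ lies above the chord $P_{l_1}P_k$ (by convexity of the parabola), the arc of $C_{kl_1m_1}$ from $P_{l_1}$ to $P_k$ that avoids $P_{m_1}$ lies below the chord. Near $P_k$ its tangent has slope $\kappa_{k,l_1m_1}$, which by the expansion above agrees to leading order with the slope $t_k+t_{l_1}$ of the chord $P_kP_{l_1}$, whereas the ray $P_kP_{l_1+1}$ has slope $t_k+t_{l_1+1}$; thus the tangent at $P_k$ sits strictly inside the wedge $\angle P_{l_1}P_kP_{l_1+1}$. The sagitta bound $|P_{l_1}P_k|^2/(8\,r_{kl_1m_1})$ for the deviation of the arc from its chord, combined with the factor $3/\sin\alpha_{i-1}$ in the growth condition, then upgrades this local statement to a global one and forces the whole arc to remain inside the wedge. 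The main obstacle is precisely this last step: both ingredients of the growth condition ($t_{i-1}^2+2R_{i-1}$ to keep the radius large, and $3t_{i-1}^2/\sin\alpha_{i-1}$ to dominate the minimal angular gap $\alpha_{i-1}$) must be used simultaneously, and one must argue uniformly along the whole arc, not merely near $P_k$.
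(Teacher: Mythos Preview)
Your route is genuinely different from the paper's, and the comparison is instructive.

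\medskip
\textbf{Part 2.} Your argument and the paper's coincide: both bound $r_{kl_2m_2}\ge \tfrac12|P_kP_{m_2}|>\tfrac12(t_{m_2}^2-t_k^2)$ from below and use $t_{m_2}\ge t_{m_2-1}^2+2R_{m_2-1}$ to push $R_{m_2-1}\ge r_{kl_1m_1}$ below it. The closed circumradius formula you derive is correct and pleasant, but you do not actually use it.

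\medskip
\textbf{Part 3.} Here you diverge. Your identity
\[
\kappa_{k,lm}=(t_k+t_l)+\frac{(t_k-t_l)\bigl[(t_k+t_l)^2+1\bigr]}{t_m^2+(t_k+t_l)t_m+(t_l^2+t_kt_l+1-t_k^2)}
\]
is right, and the $l_1=l_2$ subcase follows immediately from monotonicity in $t_m$. But in the $l_1>l_2$ subcase you assert $t_{l_1}-t_{l_2}\ge 1$, which the hypotheses do not give: the growth bound only kicks in for $i>3$, so e.g.\ $t_2-t_1$ may be arbitrarily small. One can rescue the argument by tracing how $\sin\alpha_{i-1}$ in the growth condition already encodes the small gaps among low--index $t$'s, but as written this is a gap. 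The paper avoids the issue entirely: it proves Part~3 \emph{geometrically from Parts~1 and~2}. If $l_1>l_2$, Part~1 places the arc $P_{l_2}P_k$ of $C_{kl_2m_2}$ inside the wedge $\angle P_{l_2}P_kP_{l_2+1}$, hence above the chord $P_{l_1}P_k$, while the arc $P_{l_1}P_k$ of $C_{kl_1m_1}$ lies below that chord; comparing the two arcs at $P_k$ gives the slope inequality with no numerics at all. If $l_1=l_2$, the two arcs share a chord but have different radii by Part~2, and the slope comparison is immediate.

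\medskip
\textbf{Part 1.} This is where the paper's idea is decisively simpler than yours. Instead of a sagitta estimate along the whole arc, the paper uses the tangent--chord (inscribed--angle) relation: if $\alpha$ is the angle at $P_k$ between the circle $C_{kl_1m_1}$ and the chord $P_{l_1}P_k$, then $\sin\alpha=|P_{l_1}P_k|/(2r)$ with $r=r_{kl_1m_1}$. Now $|P_{l_1}P_k|<2t_k^2$, while $2r\ge |P_kP_{m_1}|>t_{m_1}^2-t_k^2>\bigl(\tfrac{3}{\sin\alpha_k}-1\bigr)t_k^2>\tfrac{2t_k^2}{\sin\alpha_k}$ by the growth condition. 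Hence $\sin\alpha<\sin\alpha_k$, so $\alpha<\alpha_k\le \angle P_{l_1}P_kP_{l_1+1}$, and the minor arc over the chord $P_{l_1}P_k$ therefore sits inside the wedge. This single inequality simultaneously uses both ingredients of the growth condition and controls the entire arc, dissolving exactly the ``main obstacle'' you flagged. Your tangent--slope observation near $P_k$ is correct but redundant once you have the tangent--chord angle; the global sagitta bound is not needed.

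\medskip
In summary: Part~2 matches; your algebraic attack on Part~3 is viable but currently rests on an unjustified lower bound for $t_{l_1}-t_{l_2}$, whereas the paper gets Part~3 for free from Parts~1 and~2; and for Part~1 the tangent--chord angle $\sin\alpha=|P_{l_1}P_k|/(2r)$ is the missing idea that replaces your sagitta argument.
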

\begin{proof}
1. Let $r=r_{kl_1m_1}$ and $\alpha$ be the angle between the circle $C_{kl_1m_1}$ and the chord $P_{l_1}P_k$ at the point $P_k$. Then $\sin\alpha=\frac{P_{l_1}P_k}{2r}$. The growth condition implies
$$2r\ge P_kP_{m_1}>t_{m_1}^2-t_k^2>\left(\frac 3{\sin\alpha_{k}}-1\right)t^2_k>\frac {2t^2_k}{\sin\alpha_{k}}.$$
On the other hand,
$$P_{l_1}P_k=\sqrt{(t_k-t_{l_1})^2+(t_k^2-t_{l_1}^2)^2}<\sqrt{t_k^2+t_k^4}<2t_k^2.$$
Then $\sin\alpha<\sin\alpha_{k}$ and $\alpha<\alpha_{k}\le \angle P_{l_1}P_kP_{l_1+1}$.  Therefore, the arc $P_{l_1}P_k$ lies in the angle $\angle P_{l_1}P_kP_{l_1+1}$.

2. If $m_1<m_2$ then
$$
r_{kl_2m_2}\ge \frac {P_kP_{m_2}}2 > \frac{t_{m_2}^2-t_k^2}2\ge \frac{t_{m_2}^2-t_{m_2-1}^2}2 \ge R_{m_2-1}\ge r_{kl_1m_1}.
$$

3. If $l_1>l_2$ then the arc $P_{l_2}P_k$ of the circle $C_{kl_2m_2}$ lies in the angle $\angle P_{l_2}P_kP_{l_2+1}$ (see Fig.~\ref{fig:case2_ordering} left). Then it lies above the chord $P_{l_2+1}P_k$ and the chord $P_{l_1}P_k$. On the other hand, the arc $P_{l_1}P_k$ of the circle $C_{kl_1m_1}$ lies below the chord $P_{l_1}P_k$. Then the arc $P_{l_2}P_k$ lies above the arc $P_{l_1}P_k$ and $\kappa_{k,l_1m_1}>\kappa_{k,l_2m_2}$.

If $l_1=l_2$ and $m_1<m_2$ then the arc $P_{l_1}P_k$ of the circle $C_{kl_1m_1}$ and the arc $P_{l_2}P_k$ of the circle $C_{kl_2m_2}$ have the same chord but the radius of the circle $C_{kl_1m_1}$ is less than the radius of the circle $C_{kl_2m_2}$ (see Fig.~\ref{fig:case2_ordering} right). Hence, Then the arc $P_{l_2}P_k$ lies above the arc $P_{l_1}P_k$ and $\kappa_{k,l_1m_1}>\kappa_{k,l_2m_2}$.

 \begin{figure}
  \centering
  \begin{tabular}{cc}
    \includegraphics[width=0.25\textwidth]{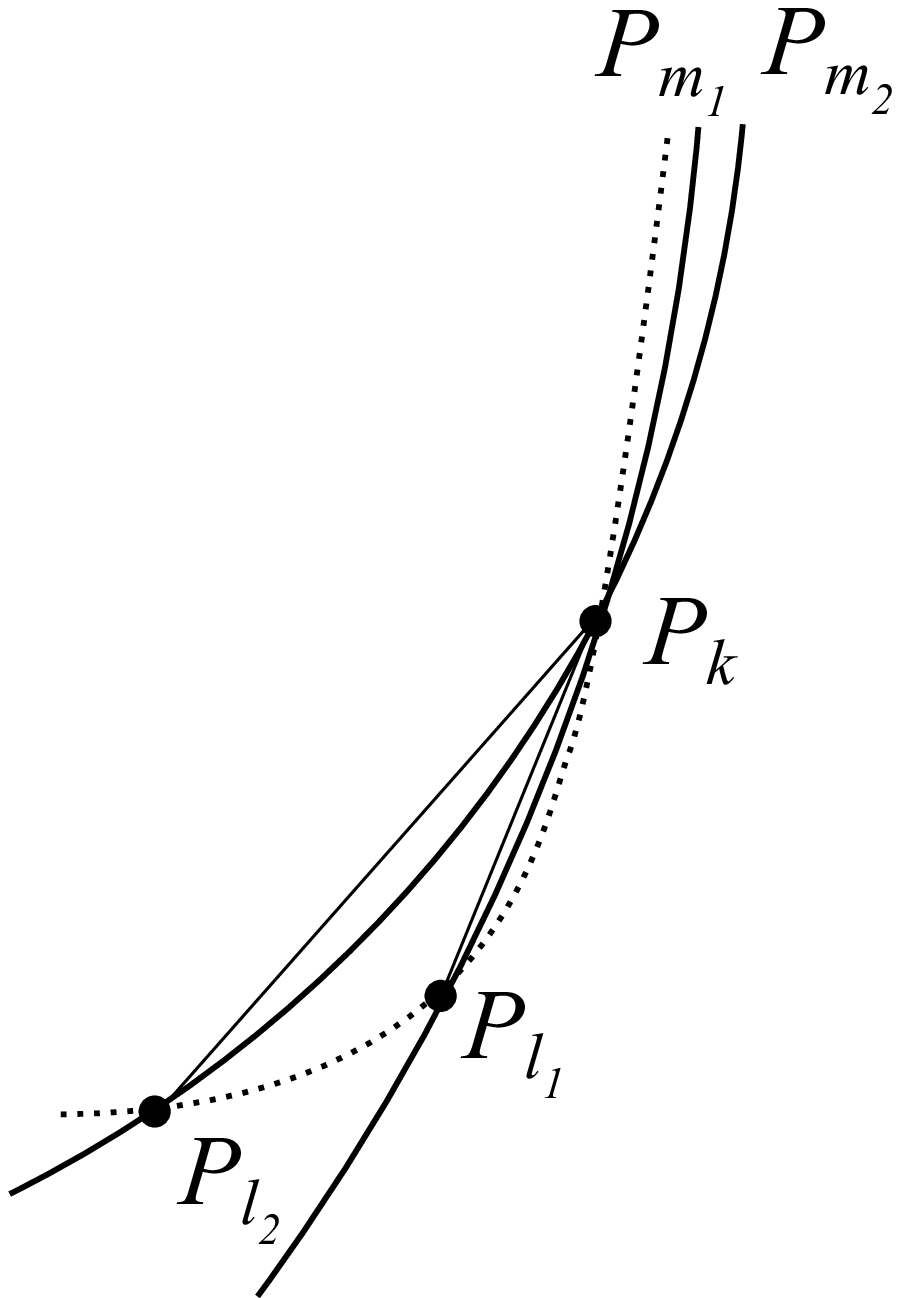} &
    \includegraphics[width=0.25\textwidth]{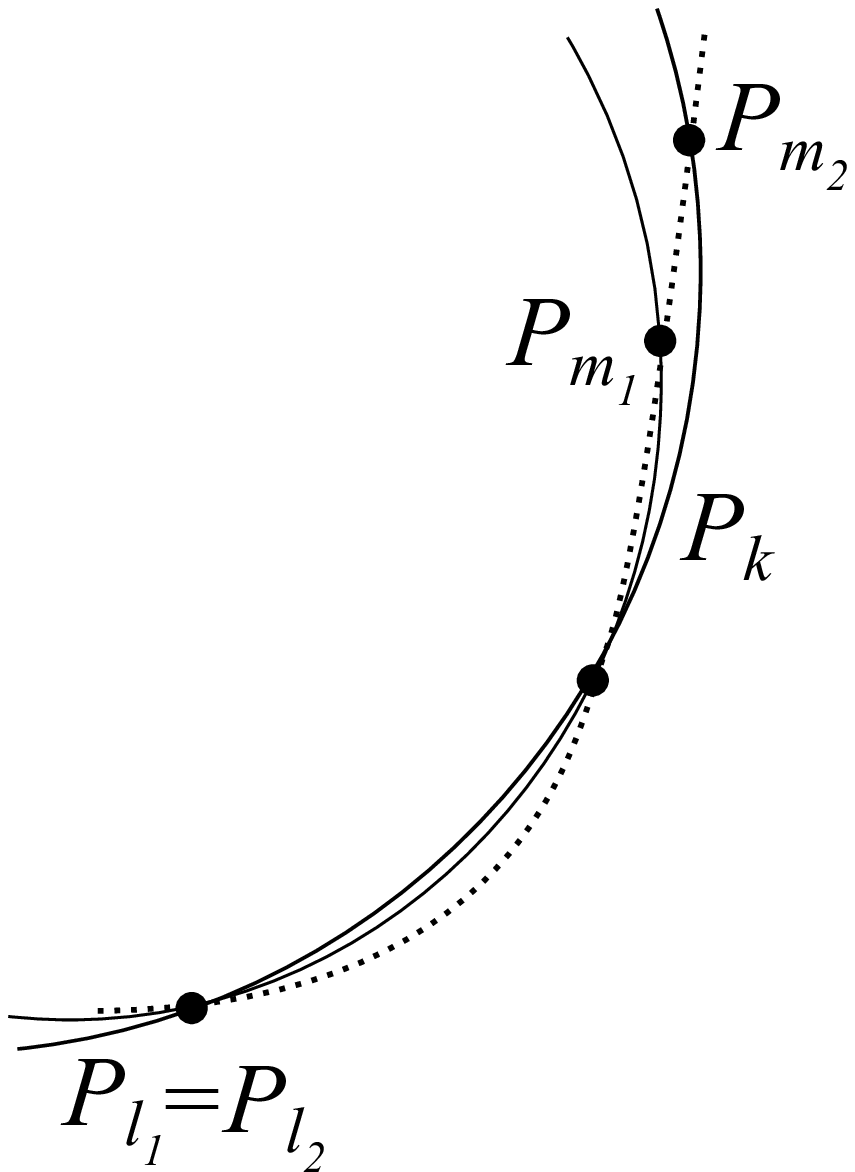}
  \end{tabular}
  \caption{Order of circles in the second case}\label{fig:case2_ordering}
 \end{figure}
\end{proof}

The lemma above establishes the order the point $i$ crosses the circles $C_{klm}$: the circle $C_{k,k-1,k+1}$ goes first, then the circles $C_{k,k-1,k+2}, C_{k,k-1,k+3},\dots, C_{k,k-1,n}$, then $C_{k,k-2,k+1},C_{k,k-2,k+2},\dots, C_{k,k-2,n}$ and so on till $C_{k,1,n}$.

 Case $t_k<t_l<t_m$.

 The circle $C_{klm}$ goes between the chord $P_lP_k$ and the graphics $\Gamma_+$ (see fig.~\ref{fig:circle_cases} right).  It means the tangent line which lies above $\Gamma_+$, belongs to the first quadrant of the plane. The case can be solved in the same manner as the previous case. The order of circles we have here is the following: $$C_{k,n-1,n},C_{k,n-2,n},C_{k,n-2,n-1},C_{k,n-3,n},\dots,C_{k,k+1,k+2}.$$

 Assume the sequence $t_i, 1\le i\le n,$ satisfies the growth conditions~\eqref{eq:growth_condition_case1} and~\eqref{eq:growth_condition_case23}. Then we now the order of circles inside each of three cases. Since the first case includes the circles whose tangent lines lie in the second quadrant, the second case includes circles with tangent lines in the third quadrant and the third case gives circles in the first quadrant, the whole order is the following: the circles of the second case go first, then the circles of the first case, then the circles of the third case. In other words, when the point $i$ rounds the point $P_k$ from above we get the word $c_{ik}$ from~\eqref{eq:d_elements}. Then the dynamical system which correspond to the pure braid $b_{ij}$, leads to the formula~\eqref{eq:maps_to_G4n}.

\end{proof}
}% end of appendix

\end{document}